		\pgfplotsset{compat=1.12}
\newcommand{\stkout}[1]{\ifmmode\text{\sout{\ensuremath{#1}}}\else\sout{#1}\fi}
\numberwithin{equation}{section}
\newtheoremstyle{break}{\topsep}{\topsep}{\itshape}{}{\bfseries}{.}{\newline}{}
\newtheoremstyle{exampl}{\topsep}{\topsep}{\upshape}{}{\bfseries}{.}{\newline}{}
\theoremstyle{plain}
\newtheorem{thm}{Theorem}[section]
\newtheorem{lem}[thm]{Lemma}  
\newtheorem{prop}[thm]{Proposition}  
\newtheorem{ass}[thm]{Assumption} 
\theoremstyle{break}
\theoremstyle{definition}
\newtheorem{defi}[thm]{Definition}
\theoremstyle{exampl}
\theoremstyle{remark}
\newtheorem{rem}[thm]{Remark}
\def\R{\mathbb{R}}
\definecolor{mygray}{gray}{.5}
\title{De Finetti's control problem with competition}
\author{Erik Ekstr\"om\footnote{Department of Mathematics, Uppsala University, Sweden. 
E-mail address: ekstrom@math.uu.se.} \and Kristoffer Lindensj\"o\footnote{Department of Mathematics, Stockholm University, Sweden. E-mail address: kristoffer.lindensjo@math.su.se.}}
\date{\today}
\begin{document}
\maketitle

\begin{abstract} 
We investigate the effects of competition in a problem of resource extraction from a common source with diffusive dynamics. In the symmetric version with identical extraction rates we prove the existence of a Nash equilibrium where the strategies are of threshold type, and we characterize the equilibrium threshold. 
Moreover, we show that increased competition leads to lower extraction thresholds and smaller equilibrium values.
%
For the asymmetric version, where each agent has an individual extraction rate, we provide the existence of an equilibrium in threshold strategies, and we show that the corresponding thresholds are ordered in the same way as the extraction rates.
\end{abstract}

\noindent \textbf{Keywords:} 
De Finetti's control problem,  
Optimal dividend problem, 
Optimal resource extraction, 
Nash equilibrium, 
Stochastic game.
\vspace{1mm}
 
\noindent \textbf{AMS MSC2010:}  
93E20; 60J70; 91A10; 91A25; 91B51.

\section{Introduction} \label{sec:intro}
 
With roots in the classical Faustmann problem of forest rotation (for modern references, see \cite{alvarez2004stochastic,alvarez2007optimal,willassen1998stochastic}), the research on resource 
extraction problems in random environments is vast. 
In this literature, a trade-off between profitable resource extraction and sustainability is an underlying theme:
a business wants to extract resources at a reasonable rate to gain profit, while a too aggressive extraction strategy may lead to extinction. Many articles focus on the single-player problem, where a single agent extracts resources from a stochastically fluctuating population (see \cite{
elsanosi2000some,hening2019asymptotic,
lande1994optimal,lungu2001optimal}).  
The problem is closely connected with the dividend problem in finance, also known as the De Finetti problem (see \cite{AT,ZS}). However, there are also application specific differences in the model set-ups, such as different dynamics used for the underlying process, different boundary conditions at extinction/default, differences regarding whether exerting control is costly or not, and whether the extraction rate is controlled directly or indirectly via an effort control.
 
In many applications, however, several agents are present and thus strategic considerations need to be taken into account. 
In \cite{jorgensen1996stochastic,wang2010stochastic}, the specific case of competitive resource extraction from a common pool of fish is considered in particular diffusion models, and with specific choices of market prices and extraction costs.  In such a framework, \cite{jorgensen1996stochastic} obtains an explicit feedback Nash equilibrium in which extraction rates are proportional to the current stock of biomass.
In an extended model allowing for a two-species fish population with interaction, \cite{wang2010stochastic}
shows that the corresponding results also hold in higher dimensions. Related literature includes \cite{haurie1994monitoring,kaitala1993equilibria} where
cooperative equilibria are discussed, and \cite{sandal2004dynamic} in which a myopic setting is used.
On the financial side, 
the reference that is closest in spirit that we have found is
\cite{ZCJL}, in which a non-zero-sum game between insurance companies is studied where the control of each company is a dividend flow as in the De Finetti problem.

The objective of the present paper is to study the effect of strategic considerations for resource extraction 
in a random environment. To isolate the effects of the introduction of strategic elements, we work
in a flexible diffusion model with absorption, but stripped from most application specific features (such as costly controls
and effort controls), and with the objective functions being simply the value of discounted accumulated resource extraction until extinction. In this setting, we show that if the agents have identical maximal extraction rates, then a symmetric feedback Nash equilibrium in threshold strategies (i.e. strategies where maximal extraction is employed above a certain threshold and no extraction is employed below it) is obtained. Moreover, we demonstrate the natural result that increased competition lowers the equilibrium threshold and the individual equilibrium value. Furthermore, the total equilibrium value decreases in the number of agents if the individual extraction rates are decreased so that the total extraction rate is kept constant, and extinction is 
accelerated. Added competition is thus inhibiting both for total profitability and for sustainability. 
%
Finally, an asymmetric version is studied, in which each agent is equipped with an individual maximal extraction rate. Again, an equilibrium in threshold strategies is obtained, and the corresponding thresholds are shown to be ordered in the same way as the extraction rates.


%

\section{Mathematical setup}

On a family of complete filtered probability spaces $\{(\Omega,\mathcal{F},(\mathcal{F}_t)_{t\geq0},\mathbb{P}_x), x\geq 0\}$ satisfying the usual conditions we consider a one-dimensional process $X=(X_t)_{t\geq 0}$ --- corresponding to the total value of a scarce resource --- given under $\mathbb{P}_x$ by 
 \begin{equation} 
dX_t =  \left(\mu(X_t)-\sum_{i=1}^n{\lambda_{it}}\right)dt + \sigma(X_t)\, dB_t, \enskip X_0=x,\label{state-process}
\end{equation}
where $\mu$ and $\sigma$ are given functions (satisfying certain conditions, see Assumption \ref{coeff-assum} below), 
$B=(B_t)_{t\geq 0}$ is a one-dimensional Wiener process, and $\lambda_i =(\lambda_{it})_{t\geq 0},i=1,...,n$ are non-negative controls that satisfy $\lambda_{it}\leq K$ at all times, where $K>0$ is a given maximal 
extraction rate. In view of the given Markovian structure, we focus on Markovian controls of the type $\lambda_{it}=\lambda_i(X_t)$, where $\lambda_i:[0,\infty)\to[0,K]$ is measurable. We refer to such a Markovian control as
an {\em admissible (resource extraction) strategy}, and an $n$-tuple $\boldsymbol{\lambda}=(\lambda_1,...,\lambda_n)$ of admissible strategies is said to be an admissible strategy {\em profile}.
We note that the conditions on the coefficients $\mu$ and $\sigma$ specified in Assumption~\ref{coeff-assum} below guarantee that \eqref{state-process} has a unique strong solution for any given admissible strategy profile, cf. \cite[Proposition 5.5.17]{KS}.
For a given admissible strategy profile $\boldsymbol{\lambda}$, the reward of each agent $i$ is given by
\begin{align}  \label{payoff-func}
\enskip J_i(x;\boldsymbol{\lambda}) := \mathbb{E}_x\left(\int_0^{\tau} e^{-rt}{\lambda_{it}}\,dt\right), 
\end{align}
where $r>0$ is a constant discount rate, ${\tau} :=\inf\{t\geq 0: X_t\leq0\}$ is the extinction time and $\lambda_{it}:=\lambda_i(X_t)$.

In this setting we allow each agent $i$ to select the strategy $\lambda_i$. Naturally, each agent $i$ seeks to make this choice in order to maximize the reward \eqref{payoff-func}, and we define a Nash equilibrium accordingly. 
Note that while our main emphasis is on the strictly competitive case $n\geq 2$, 
our results also hold for the one-player game with $n=1$. In that case, the strategic element is eliminated, and the set-up
reduces to a classical problem of stochastic control.

\begin{defi} [Nash equilibrium] \label{equilibrium-def} For a fixed initial value $x \geq 0$, 
an admissible strategy profile $\boldsymbol{\hat \lambda}=(\hat \lambda_1,...,\hat\lambda_n)$ is a Nash equilibrium if, for each $i=1,...,n$, 
\begin{align}  
J_i(x; \boldsymbol{\hat \lambda}) \geq J_i(x;({\lambda_{i}},\boldsymbol{\hat \lambda}_{-i})) \enskip
\label{NE-eq}
\end{align}
for any admissible strategy profile $({\lambda_{i}},\boldsymbol{\hat \lambda}_{-i})$, which is the strategy profile obtained when replacing $\hat \lambda_i$ with $\lambda_i$ in $\boldsymbol{\hat \lambda}$. The corresponding equilibrium values are $J_i(x; \boldsymbol{\hat \lambda}),i=1,...,n$.
\end{defi}

\textbf{Assumptions and preliminaries}: Throughout the paper we rely on the following standing assumption
(the only exception is Lemma~\ref{lemma1}, where we only assume \ref{coeff-assum:1}-\ref{coeff-assum:3}).

\begin{ass} \label{coeff-assum} \quad 
\begin{enumerate}[label=(A.\arabic*)] 

\item  \label{coeff-assum:1} 

The functions $\sigma,\mu\in {\cal C}^1([0,\infty))$.
\item \label{coeff-assum:2} 
$\sigma^2(x)>0$ and $\vert\sigma(x)\vert +\vert\mu(x)\vert\leq C(1+x)$ for some $C>0$, for all $x\geq0$. 
\item \label{coeff-assum:3} 
The function $\mu$ satisfies $\mu'(x) < r$ for all $x\geq0$.
Moreover, there exists a constant $c>0$ such that $\mu(x)\leq rx-c^{-1}$ 
for $x\geq c$.
\item \label{coeff-assum:4} 
$\mu(0) > 0$. 
\end{enumerate}
\end{ass}

For any fixed constant $A\geq 0$ we define the downwards shifted drift function 
\begin{align}
\mu_A(x)= \mu(x)- A,
\end{align}
and we denote by $\psi_A$ ($\varphi_A$) a positive and increasing (decreasing) solution to 
\begin{align}
 \frac{1}{2}\sigma^2(x)f''(x)+\mu_A(x)f'(x) -rf(x)=0 \label{L-diff-0}
\end{align} 
with $\psi_A(0)=\varphi_A(\infty)=0$. Then, $\psi_A$ and $\varphi_A$ are unique up to multiplication with positive constants, 
cf. \cite[p. 18-19]{borodin2012handbook}, and $\mathcal C^3([0,\infty))$. 
We use the normalization given by $\psi_A'(0)=1$ and $\varphi_A(0)=1$. 
If $A=0$ so that $\mu_A = \mu$, then we for notational convenience write $\psi$ and $\varphi$ instead of $\psi_0$ and $\varphi_0$. 

Most parts of the following lemma are well-known, compare
\cite[Lemma 2.2]{paulsen2007optimal}, \cite[Proposition 2.5]{bai2012optimal} 
and \cite[Lemma 4.1]{shreve1984optimal}. For the convenience of the reader, however, 
we provide a full proof.

\begin{lem} \label{lemma1}  Under \ref{coeff-assum:1}--\ref{coeff-assum:3}:
\begin{enumerate} [label=(\roman*)]
\item
$\psi'(x)>0$ for $x\geq 0$ 
and $\psi$ is (strictly) concave-convex on $[0,\infty)$ with a unique inflection point $b^* \in[0,\infty)$. 
In fact, if  $\mu(0)>0$ then $b^*>0$, and $\psi''(b^*)=0$, $\psi''(x)<0$ for $x<b^*$ and $\psi''(x)>0$ for $x>b^*$; 
and otherwise $b^*=0$ and $\psi''(x)>0$ for $x> 0$. 


\item $\varphi_A'(x)<0$ and $\varphi_A''(x)>0$ for $x\geq 0$, for all $A \geq 0$.
\end{enumerate} 
\end{lem}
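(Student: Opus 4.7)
My proof plan: First, to get $\psi'>0$ on $[0,\infty)$, I would use the normalization $\psi'(0)=1$ and argue that a first zero $x_0>0$ of $\psi'$ is impossible. On $[0,x_0)$ we have $\psi'>0$, hence $\psi(x_0)>\psi(0)=0$, and the ODE \eqref{L-diff-0} forces $\psi''(x_0)=2r\psi(x_0)/\sigma^2(x_0)>0$; this contradicts the fact that $\psi'$ is approaching $0$ from strictly positive values at $x_0$.

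For the concave-convex structure, the cleanest route is to study the auxiliary function
\begin{equation}
h(x) := \tfrac{1}{2}\sigma^2(x)\psi''(x) = r\psi(x) - \mu(x)\psi'(x),
\end{equation}
which has the same sign as $\psi''$. Differentiating gives $h'(x)=(r-\mu'(x))\psi'(x)-\mu(x)\psi''(x)$, so at any point $x_0$ with $h(x_0)=0$, assumption \ref{coeff-assum:3} combined with $\psi'>0$ yields $h'(x_0)=(r-\mu'(x_0))\psi'(x_0)>0$. Hence $h$ can have at most one zero, and any such zero is a strict upward crossing. Since $h(0)=-\mu(0)$, the case $\mu(0)\le 0$ immediately gives $h>0$ on $(0,\infty)$ (using $h'(0)>0$ when $\mu(0)=0$), which corresponds to $b^*=0$.

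When $\mu(0)>0$ we have $h(0)<0$, and the remaining task is to show $h$ must change sign. The key observation is that if $\psi''\le 0$ on $[0,\infty)$, then concavity of $\psi$ together with $\psi(0)=0$ gives $\psi(x)\ge x\psi'(x)$, which in turn leads to
\begin{equation}
\tfrac{1}{2}\sigma^2(x)\psi''(x) = r\psi(x)-\mu(x)\psi'(x) \ge (rx-\mu(x))\psi'(x) \ge c^{-1}\psi'(x) > 0
\end{equation}
for $x\ge c$ by the bound $\mu(x)\le rx-c^{-1}$ in \ref{coeff-assum:3}. This contradiction forces $h$ to cross zero at some $b^*>0$, and the uniqueness of the crossing together with the strict sign of $h'$ at $b^*$ yields the sharp sign structure of $\psi''$.

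For part (ii), I would apply the same sign-tracking idea to $h_A(x):=\tfrac{1}{2}\sigma^2(x)\varphi_A''(x)=r\varphi_A(x)-\mu_A(x)\varphi_A'(x)$. The strict inequality $\varphi_A'<0$ follows as in part (i): a zero of $\varphi_A'$ would force $\varphi_A''>0$ there via the ODE, hence $\varphi_A'$ would be locally positive, contradicting that $\varphi_A$ is decreasing. At any zero of $h_A$ one now computes $h_A'(x_0)=(r-\mu_A'(x_0))\varphi_A'(x_0)<0$, so if $h_A$ had a zero it would be a strict downward crossing, and $\varphi_A''<0$ would persist on the right of it. But then $\varphi_A'$ would be decreasing and bounded above by a strictly negative constant, forcing $\varphi_A(x)\to-\infty$ and contradicting $\varphi_A>0$ with $\varphi_A(\infty)=0$. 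Hence $h_A$ has constant sign, and the same argument rules out $h_A<0$ everywhere, leaving $h_A>0$ throughout. The main obstacle I anticipate is precisely the ruling-out of a globally concave $\psi$ in the case $\mu(0)>0$: without the linear-growth upper bound in \ref{coeff-assum:3} the concave scenario cannot be excluded, and it is the elementary concavity inequality $\psi(x)\ge x\psi'(x)$ that lets \ref{coeff-assum:3} enter decisively.
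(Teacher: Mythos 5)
Your proposal is correct and follows essentially the same route as the paper: the first-zero argument via the ODE for $\psi'>0$ and $\varphi_A'<0$, the one-directional crossing of $\psi''$ (you track $h=\tfrac{1}{2}\sigma^2\psi''=r\psi-\mu\psi'$ where the paper equivalently computes $\psi'''$ at an inflection point, both reducing to the sign of $(r-\mu')\psi'$), the exclusion of global concavity via $\psi(x)\geq x\psi'(x)$ together with the bound $\mu(x)\leq rx-c^{-1}$, and the sign of $\psi''(0)=-2\mu(0)/\sigma^2(0)$ to locate $b^*$. The argument for $\varphi_A''>0$ likewise matches the paper's (a downward crossing would make $\varphi_A$ eventually concave and decreasing, hence unbounded below).
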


\begin{proof} 
First we show that $\psi'>0$ everywhere. To see this, note that $\psi'(0)=1$, and if $\psi'(x)=0$ at some $x>0$, then $\psi'$ has a local minimum at $x$ so that $\psi''(x)=0$. Consequently, 
\[0=\frac{\sigma^2}{2}\psi''=-\mu\psi'+r\psi=r\psi>0\]
at $x$, which is a contradiction, so $\psi'>0$ everywhere.

Next assume that $\psi''(x)\leq 0$ at some point $x>0$, and assume that $y$ is a point in $(0,x)$ such that $\psi''(y)=0$ and $\psi''\leq 0 $ on $(y,x)$. Then $\psi'''(y)\leq 0$, but we also have 
\[\frac{\sigma^2}{2}\psi'''=-(\mu+ \sigma\sigma')\psi''+(r-\mu')\psi'=(r-\mu')\psi'>0\]
at $y$ by \ref{coeff-assum:3}.
The existence of a unique inflection point $b^*$ (possibly taking degenerate values 0 or $\infty$)
thus follows. 

To rule out $b^*=\infty$, note that in that case $\psi$ is concave on $[0,\infty)$, 
so 
\[0\geq \frac{\sigma^2}{2}\psi'' =-\mu\psi'+r\psi\geq -\mu^+\psi'+r\psi
\geq (rx-\mu^+)\frac{\psi}{x}>0\]
for $x$ large, where $\mu^+=\max\{\mu,0\}$. Consequently, $b^*<\infty$.
Moreover, $\frac{\sigma^2}{2}\psi''(0)=-\mu(0)$, so $b^*>0$ if $\mu(0)>0$
and $b^*=0$ if $\mu(0)<0$. If $\mu(0)=0$, then $\psi''(0)=0$, but then 
$\psi'''(0)=\frac{2(r-\mu'(0))}{\sigma^2(0)}>0$ so that $b^*=0$ also in this case.

For (ii), first note that if $\varphi'_A(x)=0$ at some $x\geq 0$, then 
$\varphi''_A(x)=\frac{2r}{\sigma^2(x)}\varphi_A(x)> 0$ so that $\varphi'_A>0$ in a right neighborhood of $x$, which contradicts the monotonicity of $\varphi_A$. Thus $\varphi'_A<0$ everywhere.

Now, assume that there exists a point $x\geq 0$ with $\varphi_ A''(x)=0$. Then
\[\frac{\sigma^2}{2}\varphi_A'''=-(\mu+ \sigma\sigma')\varphi_A''+(r-\mu')\varphi_A'=(r-\mu')\varphi_A'<0\]
at $x$, so then $\varphi_A$ is concave on $[x,\infty)$. This is, however, impossible, since $\varphi_A$ is strictly decreasing and bounded. It follows that $\varphi_A''>0$ everywhere.
\end{proof}  

\begin{rem} \label{remark-optimal-div}
It is well-known that the inflection point $b^*$ is the level at which $X$ should be reflected in the single-agent version ($n=1$) of the game described above when allowing for singular extraction rates. 
Specifically, consider the problem an agent would face if she were alone ($n=1$) and could instead of $\int_0^t\lambda_{1t}ds$, cf. \eqref{state-process}, select any non-decreasing adapted RCLL process $\Lambda$ (satisfying admissibility in the sense that $\Lambda_{0-}=0$ and $X_{\tau}\geq 0$) in order to maximize the reward
\[\mathbb{E}_x\left(\int_0^{\tau} e^{-rt} d\Lambda_{t}\right), \]
(cf. \eqref{payoff-func}); see \cite{shreve1984optimal} for the original formulation. In the present setting, this problem has the value function 
\begin{equation}\label{singular1}
U(x)=	
\begin{cases}  
			C^*\psi(x), 					&  0\leq x\leq b^*,\\
			x- b^* + C^*\psi(b^*), 					&  x \geq b^*,
			\end{cases}
\end{equation}
where $C^*= (\psi'(b^*))^{-1}$ so that $U'(b^*)=1$ and $U''(b^*)=0$, with $b^*>0$ since $\mu(0)>0$, and the optimal policy is to reflect the state process $X$ at a barrier consisting of the inflection point $b^*$, with an immediate dividend of $x-b^*$ in case $x > b^*$; see \cite[Theorem 4.3]{shreve1984optimal} and \cite[Proposition 2.6]{christensen2019moment}.
\end{rem}

\section{A threshold Nash equilibrium} \label{identical-NE}
The main objective of the present paper is to find and study Nash equilibria (Definition \ref{equilibrium-def}) of threshold type, i.e. Nash equilibria where each extraction rate $\lambda_i$ has the form
\begin{align} 
\lambda_{it}=KI_{\{ X_t\geq b_i\}}
\enskip \mbox{for some constant $b_i\geq0$ and all $t\geq0$}.
\end{align}
With a slight abuse of notation we write a strategy profile comprising only threshold strategies as $\boldsymbol{\lambda}=(b_1,...,b_n)$.


\subsection{Deriving a candidate Nash equilibrium} \label{sec:problem:ident}

In this section we derive a candidate Nash equilibrium of threshold type. We remark that this section is mainly of motivational value and that a verification result is reported in Section \ref{sec:verthm1}.

Since the agents are identical it is natural to search for a symmetric Nash equilibrium, i.e. an equilibrium of the kind $\boldsymbol{\hat \lambda} = (\hat \lambda,...,\hat \lambda)$
for some admissible strategy $\hat \lambda =(\hat\lambda_t)_{t\geq 0}$. Clearly, if $\boldsymbol{\hat \lambda}$ is such a symmetric equilibrium, then the corresponding equilibrium values are all identical,
and we will denote this shared equilibrium value function by $V= \{V(x),x\geq 0\}$.

Now, if agent $n$ (who is singled out without loss of generality) deviates from such an equilibrium by using an alternative strategy $\lambda$, then the corresponding state process is given by 
\begin{align}\label{stateP-in-heur1}
dX_t =  \left(\mu(X_t)- (n-1)\hat\lambda(X_t)-\lambda(X_t) \right)dt + \sigma(X_t) dB_t.
\end{align}
For $\boldsymbol{\hat\lambda}$ to be a Nash equilibrium it should be the case that setting $\lambda=\hat\lambda$ in \eqref{stateP-in-heur1} maximizes the corresponding reward for agent $n$ and that this strategy gives the equilibrium value $V$. Hence, from standard martingale arguments it follows that $V$ should satisfy
\begin{align}
\label{ODEyaayay0}
\frac{1}{2}\sigma^2(x)V''(x)+\left(\mu(x)-(n-1)\hat\lambda(x)-\lambda\right)V'(x) -rV(x) + \lambda & \leq 0
\end{align}
for all $\lambda\in [0,K]$ and $x>0$, and that equality in \eqref{ODEyaayay0} should be obtained when $\lambda = \hat \lambda$, i.e. 
\begin{align} 
\label{ODEyaayay1}\frac{1}{2}\sigma^2(x)V''(x)+\left(\mu(x)-n\hat\lambda(x)\right)V'(x) -rV(x) + \hat\lambda(x)= 0.
\end{align}
Subtracting Equation \eqref{ODEyaayay1} from Equation \eqref{ODEyaayay0} yields 
\begin{align} 
\left(V'(x)-1\right)(\hat\lambda(x) - \lambda)\leq 0
\end{align}
for all $\lambda\in[0,K]$.
Hence one finds that
\begin{equation} \label{asdasdas2131}
\hat\lambda(x)=	
\begin{cases}
			0, 		&x \in  \{x: V'(x)>1\}\\
			K,		&x \in \{x: V'(x)<1\},
		\end{cases}
\end{equation} 
i.e.  
\begin{itemize}
\item[(i)] each agent extracts resources according to the maximal extraction rate $K$ when $V'(x)<1$, 
\item[(ii)] no resources are extracted when $V'(x)>1$.
\end{itemize}

Since we seek a Nash equilibrium of threshold type we now make the Ansatz
$\hat \lambda(x)= KI_{\{ x\geq \hat b\}}$ 
for some threshold $\hat b \geq 0$. In this case it should hold, cf. \eqref{ODEyaayay1}, that
\begin{equation}\label{ODE1}
\frac{1}{2}\sigma^2(x)V''(x)+\mu(x) V'(x)-rV(x)  = 0, \mbox{ for } 0<x <\hat b 
\end{equation}
and
\begin{equation}
\label{ODE2}
\frac{1}{2}\sigma^2(x)V''(x)+\left(\mu(x)-nK\right)V'(x)-rV(x) + K = 0, \mbox{ for } x > \hat b
\end{equation}
as well as $V'(\hat b)=1$ in case $\hat b>0$. We treat the cases $\hat b=0$ and $\hat b>0$ separately. 

Suppose first that $\hat b=0$. Solving \eqref{ODE2} then yields $V(x)=C_1\psi_{nK}(x)+C_2\varphi_{nK}(x)+\frac{K}{r}$ for constants $C_1$ and $C_2$. Imposing $V(\infty)=K/r$ (corresponding to maximal extraction for an infinite time) and $V(0)=0$ yields $C_1=0$ and $C_2=-K/r$ so that 
\begin{align} \label{V-in-case-hatb-is0}
V(x)=\frac{K}{r}(1-\varphi_{nK}(x)).
\end{align}
Moreover, in view of \eqref{asdasdas2131}, we find that $V'(x)\leq 1$ must hold for all $x\geq 0$. In view of
the convexity of $\varphi_{nK}$, we conclude that the case $\hat b = 0$ requires that 
\begin{equation}\label{cond0}
\varphi_{nK}'(0)\geq -r/K.
\end{equation}

Next, if instead
\begin{equation}\label{cond}
\varphi_{nK}'(0)< -r/K,
\end{equation} 
then we expect that $\hat b>0$ and a corresponding equilibrium value function of the form
\begin{equation}
V(x)=
\begin{cases}	
	D_1\psi(x)+D_2\varphi(x), 					&  0<x <\hat b,\\
	D_3\psi_{nK}(x) + D_4\varphi_{nK}(x)+ \frac{K}{r}, &  x > \hat b,\\
\end{cases}
\end{equation}
for some constants $D_i$, $i=1,...,4$ and $\hat b$ to be determined. Imposing the boundary conditions $V(0)=0$ and $V(\infty)=K/r$ gives $D_2=0$ and $D_3=0$. We also make the Ansatz that $V(x)$ is differentiable at $\hat b$, which determines $D_1$ and $D_4$. We thus obtain
\begin{equation}\label{asdasdff222}
V(x)=	
\begin{cases}
			D_1\psi(x), 					&  0<x\leq \hat b,\\
			D_4\varphi_{nK}(x) + \frac{K}{r},		& x \geq \hat b, 
		\end{cases}
\end{equation}
where
\begin{align}
D_1=\frac{-\frac{K}{r}\varphi_{nK}'(\hat b)}{\varphi_{nK}(\hat b)\psi'(\hat b)-\varphi_{nK}'(\hat b)\psi(\hat b) }\label{C-def}
\end{align}
and 
\begin{equation}\label{D_4}
D_4= 
\frac{-\frac{K}{r}\psi'(\hat b)}{\varphi_{nK}(\hat b)\psi'(\hat b) -\varphi_{nK}'(\hat b)\psi(\hat b)}.
\end{equation}
Finally, in line with the arguments above, we determine $\hat b$ by imposing the condition 
\begin{equation}
\label{smoothfit}
V'(\hat b)=1. 
\end{equation}
Differentiating \eqref{asdasdff222} thus gives the equation
\begin{equation}
\frac{K}{r}\frac{\psi'(\hat b)\varphi_{nK}'(\hat b)}{\varphi_{nK}'(\hat b)\psi(\hat b)-\varphi_{nK}(\hat b)\psi'(\hat b) }=1,
\end{equation}
which can be rewritten as
\begin{equation}
\label{b}
\frac{\psi(\hat b)}{\psi'(\hat b)}-\frac{\varphi_{nK}(\hat b)}{\varphi_{nK}'(\hat b)}=\frac{K}{r}.
\end{equation}

\begin{lem} \label{lem:bhat-is-pos-sol}
Suppose \eqref{cond} holds. Then Equation \eqref{b} has a unique positive solution $\hat b$. Moreover, $0<\hat b \leq b^*$, where $b^*$ is the unique inflection point of $\psi$ (see also Remark \ref{remark-optimal-div}).
\end{lem}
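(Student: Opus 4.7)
The plan is to recast equation~\eqref{b} as a root-finding problem for
\[
F(x) := \frac{\psi(x)}{\psi'(x)} - \frac{\varphi_{nK}(x)}{\varphi_{nK}'(x)}, \qquad x \geq 0,
\]
and locate its positive roots. First I would verify the endpoint bounds $F(0) < K/r < F(b^*)$. Using $\psi(0) = 0$ and $\varphi_{nK}(0) = 1$, one has $F(0) = -1/\varphi_{nK}'(0)$, which the hypothesis~\eqref{cond} rearranges to $F(0) < K/r$. At $b^*$, Lemma~\ref{lemma1}(i) gives $\psi''(b^*) = 0$, so the ODE~\eqref{L-diff-0} at $b^*$ forces $\psi(b^*)/\psi'(b^*) = \mu(b^*)/r$; solving the ODE for $\varphi_{nK}$ at $b^*$ for the ratio $\varphi_{nK}(b^*)/\varphi_{nK}'(b^*)$ and subtracting yields
\[
F(b^*) \;=\; \frac{nK}{r} \;-\; \frac{\sigma^2(b^*)\,\varphi_{nK}''(b^*)}{2r\,\varphi_{nK}'(b^*)} \;>\; \frac{nK}{r} \;\geq\; \frac{K}{r},
\]
since Lemma~\ref{lemma1}(ii) gives $\varphi_{nK}'' > 0$ and $\varphi_{nK}' < 0$.

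Next I would establish strict monotonicity of $F$ on $[0, b^*]$ by direct differentiation,
\[
F'(x) \;=\; -\frac{\psi(x)\psi''(x)}{\psi'(x)^2} \;+\; \frac{\varphi_{nK}(x)\varphi_{nK}''(x)}{\varphi_{nK}'(x)^2}.
\]
On $[0, b^*]$ the first summand is nonnegative (since $\psi'' \leq 0$ and $\psi,\psi' \geq 0$) while the second is strictly positive by Lemma~\ref{lemma1}(ii). Combined with continuity and the endpoint bounds above, the intermediate value theorem together with strict monotonicity delivers a unique $\hat b \in (0, b^*]$ satisfying~\eqref{b}, producing both existence and the bound $\hat b \leq b^*$.

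The main obstacle I anticipate is ruling out additional positive solutions in $(b^*, \infty)$, where monotonicity of $F$ need not persist (the first summand of $F'$ changes sign past $b^*$). The route I would take is to use the two ODEs to rewrite
\[
F'(x) \;=\; \frac{2}{\sigma^2(x)}\Bigl[F(x)\bigl(\mu(x) - r(p(x)+q_{nK}(x))\bigr) + nK\,q_{nK}(x)\Bigr],
\]
with $p := \psi/\psi'$ and $q_{nK} := \varphi_{nK}/\varphi_{nK}'$. At any putative crossing $x_0 > b^*$ with $F(x_0) = K/r$, the ODE for $\psi$ combined with $\psi''(x_0) > 0$ gives $p(x_0) > \mu(x_0)/r$, while $q_{nK}(x_0) < 0$; substituting $F(x_0) = K/r$ reduces the bracket to $(2K/\sigma^2)[\mu/r - p + (n-1)q_{nK}]$, which is strictly negative, forcing $F'(x_0) < 0$. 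Together with $F(b^*) > K/r$ and a (routine) asymptotic estimate under~\ref{coeff-assum:3} showing $F(x) \to \infty$ as $x \to \infty$, this is incompatible with any zero of $F - K/r$ in $(b^*, \infty)$, yielding global uniqueness.
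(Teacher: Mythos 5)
Your treatment of $[0,b^*]$ is correct and essentially parallels the paper's: the endpoint values $F(0)=-1/\varphi_{nK}'(0)<K/r$ and $F(b^*)=\tfrac{nK}{r}-\tfrac{\sigma^2(b^*)\varphi_{nK}''(b^*)}{2r\,\varphi_{nK}'(b^*)}>nK/r\geq K/r$ check out, as does the strict positivity of $F'$ there, so existence and uniqueness of a root in $(0,b^*]$ are fine. The genuine gap is in the final step, where you exclude roots in $(b^*,\infty)$. The ``routine asymptotic estimate'' $F(x)\to\infty$ is false: in the constant-coefficient model of Section~\ref{sec:constant-coeff-sym} one has $\psi(x)/\psi'(x)\to 1/\alpha$ and $\varphi_{nK}(x)/\varphi_{nK}'(x)\equiv 1/\gamma$, so $F(x)\to 1/\alpha-1/\gamma<\infty$; Assumption \ref{coeff-assum:3} does not force $F$ to blow up. Without that limit, your (correct) local computation that any crossing of the level $K/r$ in $(b^*,\infty)$ would be strictly downward does not rule out the scenario of exactly one such downward crossing after which $F$ stays below $K/r$ forever, so global uniqueness is not established.

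The missing ingredient is the lower bound the paper extracts from the convexity of $\varphi_{nK}$. Grouping your identity the other way,
\begin{equation}
\frac{\sigma^2}{2}F'=\frac{F}{\varphi_{nK}'}\bigl(\mu_{nK}\varphi_{nK}'-r\varphi_{nK}\bigr)+\frac{\psi}{\psi'}\bigl(nK-rF\bigr)\;\geq\;\frac{\psi}{\psi'}\bigl(nK-rF\bigr),
\end{equation}
since $\mu_{nK}\varphi_{nK}'-r\varphi_{nK}=-\tfrac{\sigma^2}{2}\varphi_{nK}''\leq 0$, $\varphi_{nK}'<0$ and $F>0$. Hence at any positive point where $F=K/r$ one gets $F'>0$ (for $n\geq 2$ from the displayed bound with $\psi/\psi'>0$; for $n=1$ from the first term, using $\varphi_{nK}''>0$), so every crossing is transversal upward and there is at most one positive root --- no information about $F$ at infinity is required. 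Note that this sign is the exact opposite of the one you computed at a putative crossing $x_0>b^*$ using $\psi''(x_0)>0$; the two computations are both valid consequences of assuming such an $x_0$ exists, and together they show directly that it cannot, which is the clean way to close your argument.
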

 
\begin{proof}
Define the function $f:[0,\infty)\to\R$ by
\begin{equation}\label{f}
f(b)=\frac{\psi(b)}{\psi'(b)}-\frac{\varphi_{nK}(b)}{\varphi_{nK}'(b)},
\end{equation}
and note that $f(0)=-1/\varphi_{nK}'(0)<K/r$ by \eqref{cond}. Thus any solution to the equation $f(\hat b)=K/r$ satisfies $\hat b>0$.
Differentiation shows that 
\begin{align}\label{fprime}
\frac{\sigma^2(b)}{2}f'(b) &= \frac{\sigma^2(b)}{2}\left(\frac{\varphi_{nK}(b)\varphi_{nK}''(b)}{(\varphi_{nK}'(b))^2}-\frac{\psi(b)\psi''(b)}{(\psi'(b)) ^2}\right)\\
&= \frac{f(b)}{\varphi_{nK}'(b)}(\mu_{nK}(b)\varphi_{nK}'(b)-r\varphi_{nK}(b))+\frac{\psi(b)}{\psi'(b)}(nK-rf(b))\\
&\geq \frac{\psi(b)}{\psi'(b)}(nK-rf(b)),
\end{align}
where we have used the ODEs that $\varphi_{nK}$ and $\psi$ satisfy and, in the last inequality, that $\varphi_{nK}$ is convex so that
\[\mu_{nK}(b)\varphi_{nK}'(b)-r\varphi_{nK}(b)=-\frac{\sigma^2(b)}{2}\varphi_{nK}''(b)\leq 0.\] 
It follows that $f$ is increasing as long as $f\leq nK/r$. Consequently, the equation $f(\hat b)=K/r$ has at most one positive solution. 
Moreover, by similar calculations as in \eqref{fprime},
\begin{align}
\frac{\sigma^2(b)}{2}f'(b) &=\frac{f(b)}{\psi'(b)}(\mu\psi'(b)-r\psi(b))+\frac{\varphi_{nK}(b)}{\varphi_{nK}'(b)}(nK-rf(b)),
\end{align}
so at the inflection point $b^*$ of $\psi$ 
we have 
\[\frac{\sigma^2(b^*)}{2}f'(b^*)=\frac{\varphi_{nK}(b^*)}{\varphi_{nK}'(b^*)}(nK-rf(b^*)).\]
Comparing with \eqref{fprime} we find that
\[\frac{\psi(b^*)}{\psi'(b^*)}(nK-rf(b^*))\leq \frac{\varphi_{nK}(b^*)}{\varphi_{nK}'(b^*)}(nK-rf(b^*)),\]
which implies that $f(b^*)\geq nK/r$. Thus, by continuity there exists a point $\hat b\in(0,b^*]$ such that $f(\hat b)=K/r$, which completes the proof.
\end{proof}

Note that if $\hat b=0$, then $D_4=-K/r$ and the representation of $V$ in \eqref{asdasdff222} yields \eqref{V-in-case-hatb-is0}. Let us summarize the above discussion by specifying the candidate equilibrium threshold $\hat b$ and the corresponding equilibrium value function $V$. If \eqref{cond0} holds, then we define $\hat b=0$. If instead \eqref{cond} holds, then we define $\hat b$ as the unique positive solution of \eqref{b}. In both cases
\begin{align}\label{eqvalbhatnot0}
V(x) = \begin{cases}
			D_1\psi(x), 					&  0<x\leq \hat b,\\
			D_4\varphi_{nK}(x) + \frac{K}{r},		& x \geq \hat b, 
		\end{cases}
\end{align}
with $D_1$ and $D_4$ specified in \eqref{C-def}-\eqref{D_4}. Note that $V \in \mathcal C^1([0,\infty)) \cap \mathcal C^2([0,\infty)\setminus\{\hat b\})$ and that it satisfies \eqref{ODE1}-\eqref{ODE2}. Moreover, if $\hat b>0$ then also \eqref{smoothfit} holds.

\begin{lem}\label{conc}
The function $V$ is concave.
\end{lem}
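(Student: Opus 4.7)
The plan is to split on the cases $\hat b = 0$ and $\hat b > 0$ and establish concavity on each of the two pieces of the definition of $V$, then glue the pieces together using the $\mathcal{C}^1$ regularity already noted.

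If $\hat b = 0$, then from \eqref{V-in-case-hatb-is0} we have $V(x) = (K/r)(1-\varphi_{nK}(x))$, so $V''(x) = -(K/r)\varphi_{nK}''(x)$, which is negative for all $x \geq 0$ by Lemma~\ref{lemma1}(ii). So concavity is immediate in this case.

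The main case is $\hat b > 0$, where the argument has two ingredients. First I would check the signs of $D_1$ and $D_4$. The common denominator $\varphi_{nK}(\hat b)\psi'(\hat b) - \varphi_{nK}'(\hat b)\psi(\hat b)$ is strictly positive, since $\psi,\psi',\varphi_{nK} > 0$ and $\varphi_{nK}' < 0$ by Lemma~\ref{lemma1}. Combined with the signs of the numerators this yields $D_1 > 0$ and $D_4 < 0$. Then on $(\hat b, \infty)$ we have $V''(x) = D_4 \varphi_{nK}''(x) < 0$, using Lemma~\ref{lemma1}(ii) once more. On $(0, \hat b)$, we have $V''(x) = D_1 \psi''(x)$, and the key input here is Lemma~\ref{lem:bhat-is-pos-sol}, which guarantees $\hat b \leq b^*$, so by Lemma~\ref{lemma1}(i) we have $\psi'' \leq 0$ on $[0,\hat b]$, hence $V'' \leq 0$ on $(0,\hat b)$.

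To conclude, I would argue that a function in $\mathcal{C}^1([0,\infty))$ whose derivative is non-increasing on each of $[0,\hat b]$ and $[\hat b,\infty)$ is concave: the continuity of $V'$ at $\hat b$ bridges the two pieces, so $V'$ is non-increasing on all of $[0,\infty)$, which is equivalent to concavity. I do not expect any real obstacle here; the only substantive step is the use of $\hat b \leq b^*$ to guarantee the correct sign of $\psi''$ on the lower piece, which is precisely what Lemma~\ref{lem:bhat-is-pos-sol} was designed for.
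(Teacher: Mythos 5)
Your proposal is correct and follows exactly the route the paper intends: the paper's own proof simply cites \eqref{eqvalbhatnot0}, Lemma~\ref{lem:bhat-is-pos-sol} and Lemma~\ref{lemma1} without spelling out the details, and you have filled in precisely those details (signs of $D_1$ and $D_4$, the bound $\hat b\leq b^*$ giving $\psi''\leq 0$ on $[0,\hat b]$, convexity of $\varphi_{nK}$ on the upper piece, and the $\mathcal C^1$ gluing at $\hat b$). No gaps.
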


\begin{proof}
If \eqref{cond0} holds, then 
$D_4=-K/r$ and so $V(x)=\frac{K}{r}(1-\varphi_{nK}(x))$. The concavity of $V$ thus follows from convexity of $\varphi_{nK}$.
If instead \eqref{cond} holds, 
then the claim is verified using \eqref{eqvalbhatnot0}, Lemma \ref{lem:bhat-is-pos-sol}  and Lemma \ref{lemma1}.
\end{proof}


%
 
\subsection{A verification theorem for the threshold Nash equilibrium} \label{sec:verthm1}


\begin{thm}  \label{verifi-thm}  A Nash equilibrium of threshold type exists. In particular, define 
$\hat b=0$ if \eqref{cond0} holds, and let 
$\hat b$ be the unique solution to \eqref{b} if instead \eqref{cond} holds. Then, $\boldsymbol{\hat \lambda}=(\hat b,...,\hat b)$ is a Nash equilibrium and the corresponding equilibrium value functions are identical and given by $V$ defined in \eqref{eqvalbhatnot0}.
\end{thm}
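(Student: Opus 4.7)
The plan is to verify that the candidate symmetric profile $\boldsymbol{\hat\lambda} = (\hat b,\dots,\hat b)$ is a Nash equilibrium along the standard route of HJB verification. I fix an arbitrary agent (say agent $n$) with an arbitrary admissible Markov strategy $\lambda$, while agents $1,\dots,n-1$ use $\hat\lambda(x) = K \mathbb I_{\{x \geq \hat b\}}$, and show that the resulting reward is bounded above by $V(x)$, with equality when $\lambda = \hat\lambda$.

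The first step is to establish the pointwise HJB inequality
\begin{equation*}
\tfrac{1}{2}\sigma^2(x) V''(x) + \bigl(\mu(x) - (n-1)\hat\lambda(x) - \lambda\bigr) V'(x) - r V(x) + \lambda \leq 0
\end{equation*}
for every $x \in (0,\infty)\setminus\{\hat b\}$ and every $\lambda \in [0,K]$, with equality when $\lambda = \hat\lambda(x)$. By construction $V$ satisfies \eqref{ODE1} on $(0,\hat b)$ and \eqref{ODE2} on $(\hat b,\infty)$, which are precisely the equality case; subtracting these from the inequality reduces the claim to $(\lambda - \hat\lambda(x))(V'(x) - 1) \geq 0$. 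I check this region by region: on $(0,\hat b)$ we have $\hat\lambda = 0$, and concavity of $V$ from Lemma \ref{conc} together with the smooth fit $V'(\hat b) = 1$ yields $V'(x) \geq 1$; on $(\hat b,\infty)$ we have $\hat\lambda = K$ and symmetrically $V'(x) \leq 1$. In the degenerate case $\hat b = 0$, $V$ reduces to \eqref{V-in-case-hatb-is0} and the inequality $V' \leq 1$ on all of $[0,\infty)$ is precisely \eqref{cond0} combined with convexity of $\varphi_{nK}$.

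With the HJB inequality in hand, I apply Itô's formula to $e^{-rt} V(X_t)$ along the deviated state process \eqref{stateP-in-heur1}. Since $V \in \mathcal C^1([0,\infty)) \cap \mathcal C^2([0,\infty)\setminus\{\hat b\})$ and $V'$ is of bounded variation (being nonincreasing by Lemma \ref{conc}), a generalised Itô formula (Itô--Meyer) applies; moreover, the continuity of $V'$ at $\hat b$ guarantees that no local-time contribution appears. Combining with the HJB inequality identifies
\begin{equation*}
M_t := e^{-r(t\wedge\tau_N)} V(X_{t\wedge\tau_N}) + \int_0^{t\wedge\tau_N} e^{-rs} \lambda(X_s)\, ds
\end{equation*}
as a supermartingale for the localising sequence $\tau_N := \inf\{t \geq 0 : X_t \geq N\} \wedge \tau$, which becomes a true martingale in the equilibrium case $\lambda = \hat\lambda$.

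Taking expectations in $M_t$ and passing to the limit $N \to \infty$, $t \to \infty$ produces the desired comparison. Here the key ingredients are the boundedness of $V$ on $[0,\infty)$ (since $V(x) \to K/r$ as $x \to \infty$ by $\varphi_{nK}(\infty) = 0$) and the boundary behaviour $V(X_\tau) = V(0) = 0$ on $\{\tau < \infty\}$, with $e^{-rt} V(X_t) \to 0$ on $\{\tau = \infty\}$ by the discount factor; dominated convergence then delivers
\begin{equation*}
V(x) \geq \mathbb E_x\left[\int_0^\tau e^{-rs} \lambda(X_s)\, ds\right] = J_n(x;(\lambda,\boldsymbol{\hat\lambda}_{-n})),
\end{equation*}
with equality when $\lambda = \hat\lambda$. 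Symmetry across agents then yields the Nash property and identifies $V$ as the common equilibrium value. The main technical obstacle I anticipate is justifying Itô's formula and the subsequent limit passage despite $V$ failing to be $\mathcal C^2$ at $\hat b$; this is, however, a standard feature of De Finetti-type singular/threshold problems, handled cleanly by the continuity of $V'$ (no local time) and the boundedness of $V$.
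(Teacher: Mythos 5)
Your proposal is correct and follows essentially the same route as the paper's proof: the pointwise HJB inequality reduced to $(V'-1)(\hat\lambda-\lambda)\leq 0$ via concavity of $V$ and the smooth-fit/boundary conditions, followed by It\^o's formula (valid since $V'$ is continuous at $\hat b$), localization, and a monotone/dominated convergence passage to the limit. The only differences are cosmetic (your localizing sequence $\tau_N$ versus the paper's $\theta_t$, and the paper's slightly simpler use of $V\geq 0$ with monotone convergence for the inequality direction).
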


\begin{proof} Suppose an arbitrary agent $i$ deviates from $\boldsymbol{\hat \lambda}=(\hat b,...,\hat b)$. More precisely, consider the strategy profile $(\lambda,\boldsymbol{\hat \lambda}_{-i})$ (cf. Definition \ref{equilibrium-def}), where $\lambda$ 
is an arbitrary admissible strategy. Then $X$ is given by  
\begin{align} 
dX_t =  \left(\mu(X_t)-\lambda(X_t)-(n-1)KI_{\{X_t\geq \hat b\}}\right)dt + \sigma(X_t)\,dB_t.
\end{align}
Using It\^o's formula we obtain
\begin{align}
 e^{-r(\tau \wedge \theta_t)}V(X_{\tau \wedge \theta_t }) 
& =  V(x)  + \int_0^{\tau \wedge \theta_t } e^{-rs} 
\left(\mu(X_s)-\lambda(X_s)-(n-1)KI_{\{X_s\geq \hat b\}} \right)V'(X_s)\,ds\\
& \quad+\int_0^{\tau \wedge \theta_t }e^{-rs} 
\left(\frac{1}{2}\sigma^2(X_s)V''(X_s)I_{\{X_s\neq \hat b\}}-rV(X_s)\right)ds\\
&\quad+ \int_0^{\tau \wedge \theta_t } e^{-rs} \sigma(X_s)V'(X_s)\,dB_s,
\end{align}
where $\theta_t:= \inf\{s\geq 0: X_s \geq t\}\wedge t$ (with the standard convention that the infimum of the empty set is infinite). It is directly verified, cf. \eqref{ODE1}-\eqref{ODE2}, that 
\begin{align}
\frac{1}{2}\sigma^2(x)V''(x)+\left(\mu(x)-nKI_{\{x\geq \hat b\}}\right)V'(x)-rV(x)+KI_{\{x\geq \hat b\}} = 0
\end{align}
for all $x\neq \hat b$,
which implies that 
\begin{align}
 e^{-r{\tau \wedge \theta_t}}V(X_{\tau \wedge \theta_t }) 
& =  V(x)  + \int_0^{\tau \wedge \theta_t } e^{-rs} 
\left(   \left(V'(X_s)-1\right)\left( KI_{\{X_s\geq \hat b\}}-\lambda(X_s) \right)-\lambda(X_s)     \right) ds\\
&\quad+ \int_0^{\tau \wedge \theta_t } e^{-rs} \sigma(X_s)V'(X_s)\,dB_s.
\end{align}
In case \eqref{cond0} holds, we have $V(x)=\frac{K}{r}(1-\varphi_{nK}(x))$. By concavity of $V$ (cf. Lemma~\ref{conc}) we thus obtain $V'(x)\leq V'(0)\leq 1$, so
\begin{align}\label{proof1help1}
\left(V'(x)-1\right)\left( KI_{\{x\geq \hat b\}}-\lambda \right) \leq 0, \mbox{ for all $\lambda \in [0,K]$.}
\end{align}
Similarly, if \eqref{cond} holds, then it follows by concavity of $V$ and $V'(\hat b)=1$ that \eqref{proof1help1} holds. Hence
\begin{align}
V(x) 
& \geq - \int_0^{\tau \wedge \theta_t} e^{-rs} \sigma(X_s)V'(X_s)\,dB_s + e^{-r{\tau \wedge \theta_t}}V(X_{\tau \wedge \theta_t }) + 
\int_0^{\tau \wedge \theta_t} e^{-rs}\lambda(X_s)\,ds.
\end{align}
Taking expectation yields
\begin{align}
V(x) \geq  \mathbb{E}_x\left(e^{-r{\tau \wedge \theta_t }}V(X_{\tau \wedge \theta_t })\right) + 
\mathbb{E}_x\left(\int_0^{\tau \wedge \theta_t } e^{-rs}\lambda(X_s)\,ds\right),
\end{align}
where we used that $V'(\cdot)$ and $\sigma(\cdot)$ are bounded on the stochastic interval $[0,\theta_t]$ so that the expectation of the stochastic integral vanishes. Since $V\geq 0$, using monotone convergence we find that 
\begin{align} \label{asjdashh23214}
V(x) \geq  
\mathbb{E}_x\left(\int_0^{\tau} e^{-rs}\lambda(X_s)\,ds\right) = J_i(x;({\lambda_{i}},\boldsymbol{\hat \lambda}_{-i})).
\end{align}
Furthermore, using the threshold strategy $KI_{\{X_t\geq \hat b\}}$ instead of $\lambda_t$ in the calculations above yields
\begin{align}
V(x) =  \mathbb{E}_x\left(e^{-r{\tau \wedge \theta_t }}V(X_{\tau \wedge \theta_t })\right) + 
\mathbb{E}_x\left(\int_0^{\tau \wedge \theta_t } e^{-rs}\lambda(X_s)\,ds\right),
\end{align}
so $V(0)=0$ and dominated and monotone convergence  (using concavity of $V$ for the domination) yield 
\begin{align} 
V(x)  =   \mathbb{E}_x\left(\int_0^{\tau} e^{-rt}KI_{\{X_t\geq \hat b\}}\,dt\right) =J_i(x;\boldsymbol{\hat \lambda}).
\end{align}
Consequently, $\boldsymbol{\hat \lambda}=(\hat b,...,\hat b)$ is a Nash equilibrium.
%
%
%
%
%
%
%
\end{proof}

\subsection{Properties of the threshold Nash equilibrium}\label{sec:proper-iden-NE}

To understand the effect of competition, we provide a short study of the dependence on the number $n$ of agents for the equilibrium threshold and for the corresponding equilibrium value. Our first result in this direction shows that 
increased competition lowers the equilibrium threshold and decreases each agent's equilibrium value.

\begin{thm} \label{competition} 
\textbf {(The effect of increased competition.)}
With all other parameters being held fixed, the threshold $\hat b=\hat b(n)$ in the symmetric threshold strategy $\boldsymbol{\hat \lambda}=(\hat b,...,\hat b)$ is decreasing in $n$. In fact, there exists a number $\bar n$ such that $\hat b$ is  decreasing for $n < \bar n$ and $\hat b=0$ for $n \geq \bar{n}$.
Moreover, the equilibrium value $V(x)=V(x;n)$ is decreasing in $n$.
\end{thm}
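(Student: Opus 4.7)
My plan is to prove the two claims in sequence: first the monotonicity of $\hat b(n)$ and the existence of $\bar n$, and then use this to derive the monotonicity of $V(\cdot;n)$.

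\textbf{Threshold and $\bar n$.} When $\hat b(n)>0$, Lemma~\ref{lem:bhat-is-pos-sol} characterises it by $\psi(\hat b)/\psi'(\hat b)+g_n(\hat b)=K/r$, where
\[ g_n(b):=-\frac{\varphi_{nK}(b)}{\varphi'_{nK}(b)}, \]
and that proof moreover shows the left-hand side has strictly positive $b$-derivative at $\hat b$. Since $\psi$ is $n$-independent, by the implicit function theorem monotonicity of $\hat b(n)$ reduces to showing that $g_n(b)$ is strictly increasing in $n$ for each $b\geq 0$. To this end I would differentiate \eqref{L-diff-0} (with $A=nK$) in $n$: $w:=\partial_n \varphi_{nK}$ solves the inhomogeneous linear ODE
\[ \tfrac12\sigma^2 w'' + (\mu-nK) w' - rw \;=\; K\varphi'_{nK}\;<\;0, \qquad w(0)=w(\infty)=0. \]
The maximum principle applied to $-w$ (valid since the zeroth-order coefficient $-r$ is negative) gives $w>0$ on $(0,\infty)$, and Hopf's lemma at $0$ gives $w'(0)>0$. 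The Wronskian-type quantity $W:=\varphi_{nK} w' - w\varphi'_{nK}$ then satisfies (after using the ODEs for $\varphi_{nK}$ and $w$) the first-order linear equation
\[ \tfrac12 \sigma^2 W'+(\mu-nK)W \;=\; K\varphi_{nK}\varphi'_{nK}\;<\;0, \]
and an integrating-factor solution together with $W(0)=w'(0)>0$ and the decay of $\varphi_{nK}$ and $w$ at infinity shows $W>0$ on $[0,\infty)$. Since $\partial_n g_n = W/(\varphi'_{nK})^2$, the required monotonicity follows. For the existence of $\bar n$: as $n\to\infty$, the strongly negative drift $\mu-nK$ forces $\varphi_{nK}\to 1$ uniformly on compacts, hence $|\varphi'_{nK}(0)|\to 0$ and $g_n(0)\to\infty$; combined with monotonicity, there is a smallest $\bar n$ beyond which \eqref{cond0} holds, so $\hat b(n)=0$ for $n\geq\bar n$.

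\textbf{Value function.} Write $V_n:=V(\cdot;n)$. My plan for $V_n\geq V_{n+1}$ is an It\^o/verification argument in the spirit of the proof of Theorem~\ref{verifi-thm}. Apply It\^o to $e^{-rt}V_n(X_t)$ along the dynamics of the $(n+1)$-agent Nash equilibrium (total extraction $(n+1)KI_{\{X\geq\hat b(n+1)\}}$) and add the individual reward rate $KI_{\{X\geq\hat b(n+1)\}}$; the drift of the resulting process is
\[ \Psi(x)\;=\; \tfrac12\sigma^2V_n''+\bigl(\mu-(n+1)KI_{\{x\geq\hat b(n+1)\}}\bigr) V_n' - rV_n + KI_{\{x\geq\hat b(n+1)\}}. \]
Using the piecewise form \eqref{eqvalbhatnot0} of $V_n$, the concavity of $V_n$ (Lemma~\ref{conc}) with $V_n'(\hat b(n))=1$ (so $V_n'\geq 1$ on $[0,\hat b(n)]$ and $V_n'\leq 1$ beyond), and $\hat b(n+1)\leq\hat b(n)$ established above, I would verify $\Psi\leq 0$ region by region: $\Psi\equiv 0$ on $\{x<\hat b(n+1)\}$ by \eqref{ODE1}; $\Psi=K(1-(n+1)V_n')\leq -nK<0$ on $\{\hat b(n+1)\leq x<\hat b(n)\}$ by \eqref{ODE1} and $V_n'\geq 1$; and $\Psi=-KV_n'\leq 0$ on $\{x\geq\hat b(n)\}$ by \eqref{ODE2}. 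Localising exactly as in the proof of Theorem~\ref{verifi-thm}, taking expectations to kill the stochastic integral, and using $V_n\geq 0$ together with monotone convergence then yields $V_n(x)\geq V_{n+1}(x)$.

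The main obstacle is the first step: the pointwise-in-$b$ monotonicity of $g_n$ in $n$. The probabilistic intuition—larger $n$ means faster extinction, hence a flatter $\varphi_{nK}$ near $0$—is transparent, but the Wronskian computation requires some care at infinity, using the growth bounds in Assumption~\ref{coeff-assum}, in order to conclude that $W$ remains positive on the whole of $[0,\infty)$.
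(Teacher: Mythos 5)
Your treatment of the threshold follows the same strategy as the paper: reduce everything to the pointwise monotonicity in $n$ of $-\varphi_{nK}/\varphi_{nK}'$ and then invoke the monotonicity of $f$ established in the proof of Lemma~\ref{lem:bhat-is-pos-sol}. The paper simply asserts this monotonicity in $n$ (together with the monotonicity of $\varphi_{nK}'(0)$, which is its value at $b=0$); you attempt an actual proof via parameter differentiation, and this is where your argument, as written, does not quite close. From $(MW)'=2K M\varphi_{nK}\varphi_{nK}'/\sigma^2<0$ and $W(0)=w'(0)>0$ you only learn that $MW$ decreases from a positive value; a decreasing function starting positive can still cross zero, so positivity of $W$ on all of $[0,\infty)$ does not follow from the integrating factor alone. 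The gap is fillable: if $W(x_0)<0$, then $M W\leq M(x_0)W(x_0)<0$ on $[x_0,\infty)$, and since $W/\varphi_{nK}^2=(w/\varphi_{nK})'$ while $\int^\infty (M\varphi_{nK}^2)^{-1}\,dx=\infty$ (this integral is, up to a constant, $\lim_{x\to\infty}\psi_{nK}(x)/\varphi_{nK}(x)=\infty$), one concludes $w/\varphi_{nK}\to-\infty$, contradicting $w>0$. You would also need to justify the differentiability of $\varphi_{nK}$ in the parameter and the boundary behaviour $w(\infty)=0$ used in your maximum principle. Since the paper offers no proof at all of this monotonicity, your route is acceptable modulo these details, which you yourself flag.

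For the value function your argument is genuinely different from the paper's and is correct. The paper compares $V_m$ and $V_n$ directly: the explicit form $A\psi$ and concavity give $V_m\geq V_n$ on $[0,\hat b_m]$, and a maximum principle for $u=V_m-V_n$ on $[\hat b_m,\infty)$ with $u(\infty)=0$ finishes the job. You instead show that $V_n$ is a supersolution of the individual HJB identity along the $(n+1)$-agent equilibrium dynamics; your region-by-region verification of $\Psi\leq 0$ is correct (including the degenerate cases where one or both thresholds vanish), and the localization and convergence steps are exactly those of Theorem~\ref{verifi-thm}, yielding $V_n\geq J_i(\,\cdot\,;\boldsymbol{\hat\lambda}^{(n+1)})=V_{n+1}$. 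What this buys you is that you avoid the elliptic maximum principle on an unbounded interval and the boundary condition at infinity (which in the paper requires the small observation that $V_m(\infty)=V_n(\infty)=K/r$), at the cost of rerunning the stochastic verification machinery. Both routes are valid.
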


\begin{proof}
First note that $\varphi_{nK}'(0)$ is increasing in $n$, so there exists $\bar n$ such that condition \eqref{cond0} holds if $n\geq \bar n$ (so that $\hat b=0$ for such $n$), and \eqref{cond} holds if $n<\bar n$. Moreover, 
$-\frac{\varphi_{mK}}{\varphi_{mK}'}\leq -\frac{\varphi_{nK}}{\varphi_{nK}'}$ if $m\leq n$. Consequently, the function $f=f_n$ in \eqref{f} is increasing in $n$, so it follows that $\hat b=\hat b(n)$ is decreasing in $n$ for $n<\bar n$ (cf. the proof of Lemma \ref{lem:bhat-is-pos-sol}). 

To show that $V$ is decreasing in $n$, let $m\leq n$ and denote by $\hat b_m,\hat b_n$ and 
$V_m,V_n$ the corresponding thresholds and equilibrium values, respectively. By the above, $\hat b_m\geq \hat b_n$. 

If $\hat b_m\geq \hat b_n>0$, then 
$V_m=A_m\psi$ and $V_n=A_n\psi$ on $[0,\hat b_m]$ and $[0,\hat b_n]$,
respectively, so $V_n'(\hat b_n)=1\leq V_m'(b_n)$ by the concavity of $V_m$. Thus $A_m\geq A_n$, and 
\begin{equation}\label{ineq1}
V_m\geq V_n\mbox{ on }[0,\hat b_n].
\end{equation}
Clearly, \eqref{ineq1} holds also (trivially) in the case $\hat b_n=0$. 

As a consequence of \eqref{ineq1}, we also have $V_m\geq V_n$
on $[0,\hat b_m]$ since $V'_n\leq 1\leq V'_m$ on 
$[\hat b_n,\hat b_m]$. On the interval $[\hat b_m,\infty)$, the function $u:=V_m-V_n$ satisfies
\[\frac{\sigma^2}{2}u''+(\mu-nK)u'-ru=-(n-m)KV_m'\leq 0\]
with boundary conditions $u(\hat b_m)\geq 0$ and $u(\infty)=0$. The maximum principle then yields 
$u\geq 0$ on $[\hat b_m,\infty)$. Consequently, $V_m\geq V_n$ everywhere, which completes the proof.
\end{proof}

Intuitively, the effect in Theorem~\ref{competition}
of adding more agents of the same type is negative as it both adds competition and also increases the maximal total
push rate  $nK$ (while the individual push rate $K$ is constant). In the following result we isolate the effect of increased competition by studying the case with a fixed 
maximal total push rate $\overline K$.

\begin{thm} \label{incrcomp}
\textbf {(The effect of increased competition with constant total extraction rate.)}
Let the maximal individual extraction rate be $K=\overline K/n$ (so that the total maximal extraction rate $\overline K$ is independent of $n$). Then the equilibrium threshold $\hat b=\hat b(n)$ and the total equilibrium value $nV(x)$ are both decreasing in $n$.
\end{thm}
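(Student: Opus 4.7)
The plan is to exploit the fact that, under the scaling $K=\overline K/n$, the aggregate extraction rate $nK=\overline K$ is independent of $n$, so $\varphi_{nK}=\varphi_{\overline K}$ is fixed and the total equilibrium value $nV(x;n)$ can be written as $W(x;\hat b(n))$, where
\[
W(x;b)=\begin{cases} A(b)\,\psi(x), & 0\leq x\leq b,\\ B(b)\,\varphi_{\overline K}(x)+\overline K/r, & x\geq b,\end{cases}
\]
with $A(b)=-(\overline K/r)\varphi_{\overline K}'(b)/D(b)$, $B(b)=-(\overline K/r)\psi'(b)/D(b)$, and $D(b)=\varphi_{\overline K}(b)\psi'(b)-\varphi_{\overline K}'(b)\psi(b)$. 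These formulas are obtained by reading off \eqref{eqvalbhatnot0}--\eqref{D_4} with $K=\overline K/n$ and then multiplying by $n$. Both claims of the theorem thereby reduce to analysing the maps $n\mapsto\hat b(n)$ and $b\mapsto W(\cdot;b)$.

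For the threshold: with $K=\overline K/n$, equation \eqref{b} becomes $f(\hat b(n))=\overline K/(rn)$, where $f(b)=\psi(b)/\psi'(b)-\varphi_{\overline K}(b)/\varphi_{\overline K}'(b)$ no longer depends on $n$. The proof of Lemma~\ref{lem:bhat-is-pos-sol} establishes that $f$ is strictly increasing as long as $f\leq \overline K/r$, so the equation forces $\hat b(n)$ to be non-increasing in $n$, and moreover $\hat b(n)\in[0,\hat b(1)]$ for all $n\geq 1$.

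It then remains to show that $b\mapsto W(x;b)$ is non-decreasing on $[0,\hat b(1)]$ for each $x\geq 0$. For fixed $x$ and $b\neq x$, differentiation in $b$ gives $\partial_b W(x;b)=A'(b)\psi(x)$ when $b>x$ and $\partial_b W(x;b)=B'(b)\varphi_{\overline K}(x)$ when $b<x$; continuity at $b=x$ is a consequence of the $C^1$ matching conditions. A direct calculation (using $D'=\varphi_{\overline K}\psi''-\varphi_{\overline K}''\psi$) shows that $A'(b)$ and $B'(b)$ share the same sign, namely that of $-h(b)$ where $h(b):=\varphi_{\overline K}''(b)\psi'(b)-\varphi_{\overline K}'(b)\psi''(b)$. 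Substituting the ODEs for $\psi''$ and $\varphi_{\overline K}''$ simplifies this to $h(b)=(2/\sigma^2(b))[rD(b)+\overline K\,\varphi_{\overline K}'(b)\psi'(b)]$, and dividing by $-\varphi_{\overline K}'(b)>0$ and rearranging shows that $-h(b)>0$ is equivalent to $f(b)<\overline K/r$, which is in turn equivalent to $b<\hat b(1)$. Hence $A',B'>0$ on $(0,\hat b(1))$, so $W(x;b)$ is increasing in $b$ on $[0,\hat b(1)]$; combined with the monotonicity of $\hat b(n)$, this yields that $nV(x;n)=W(x;\hat b(n))$ is non-increasing in $n$.

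The main obstacle is the last block of computations: verifying that both $A'(b)$ and $B'(b)$ carry the same sign, and recognising that this sign is governed by exactly the same function $f(b)$ whose crossing of the level $\overline K/r$ characterises the single-agent optimum $\hat b(1)$. Once this identification is in place, the monotonicity of $W$, and hence of $nV(x;n)$, is immediate.
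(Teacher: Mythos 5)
Your argument is correct, and its first two components coincide with the paper's: the monotonicity of $\hat b(n)$ via the $n$-independence of $f$, and the positivity of $A'$ and $B'$, which is exactly the paper's statement that $nD_1(n)$ and $nD_4(n)$ decrease in $n$ (your identity $\tfrac{\sigma^2}{2}h=rD+\overline K\varphi_{\overline K}'\psi'$ is the same computation as the paper's \eqref{thm:fixedbarKEQ} in a different normalization, since dividing by $-\varphi_{\overline K}'\psi'>0$ turns one into the other). Where you genuinely depart from the paper is the final gluing step. The paper deduces $mV(\cdot;m)\geq nV(\cdot;n)$ only on $[0,\hat b(n)]\cup[\hat b(m),\infty)$ from the coefficient monotonicity, and then needs a separate maximum-principle argument for the function $H=mV(\cdot;m)-nV(\cdot;n)$ on the intermediate interval $[\hat b(n),\hat b(m)]$, where the two value functions are given by different branches of \eqref{eqvalbhatnot0}. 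You avoid this entirely by parametrizing the total value as $W(x;b)$ and observing that, for fixed $x$, the map $b\mapsto W(x;b)$ is continuous across $b=x$ (by the value-matching built into $A$ and $B$) and has nonnegative one-sided derivatives on both sides; monotonicity in $b$ then holds for every $x$ simultaneously, including the awkward middle regime. This buys a shorter and more elementary conclusion at no cost; the paper's maximum-principle route is slightly more robust in that it does not require the closed-form parametrization in $b$. One point worth making explicit in a write-up: the equivalence $f(b)<\overline K/r\iff b<\hat b(1)$ uses that $f$ cannot return below the level $\overline K/r$ once it reaches it, which follows from the differential inequality for $f$ in the proof of Lemma~\ref{lem:bhat-is-pos-sol}, and the degenerate case $\hat b(1)=0$ should be noted as trivial.
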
 

\begin{proof}
The function $f$ in \eqref{f} is in this case given by $f(b)=\frac{\psi(b)}{\psi'(b)}-\frac{\varphi_{\overline K}(b)}{\varphi_{\overline K}'(b)}$, which is independent of $n$,
and the threshold $\hat b=\hat b(n)$ is given as the unique solution of $f(\hat b)=K/r=\overline K/(rn)$ provided $\varphi_{\overline K}'(0)<-rn/\overline K$, and $\hat b(n)=0$ otherwise. Since $f$ is increasing as long as 
$f<\overline K/r$ (cf. the proof of Lemma \ref{lem:bhat-is-pos-sol}), the solution of $f(\hat b(n))=\overline K/(nr)$ is decreasing in $n$.

It remains to show that $nV(x)$ decreasing. Recall that
\begin{equation}
V(x)=	
\begin{cases}
			D_1(n)\psi(x), 					&  0<x\leq \hat b(n),\\
			D_4(n)\varphi_{{\overline K}}(x) + \frac{{\overline K}}{nr},		& x \geq \hat b(n), 
		\end{cases}
\end{equation}
where $\hat b(n)$ solves
\[f(\hat b(n)):=\frac{\psi(\hat b(n))}{\psi'(\hat b(n))} - \frac{\varphi_{\overline K}(\hat b(n))}{\varphi_{\overline K}'(\hat b(n))}=\frac{\overline K}{nr}\]
if \eqref{cond} holds, and $\hat b(n)=0$ otherwise. Moreover, 
\begin{align}
D_1(n)=\frac{-\frac{\overline K}{nr}\varphi_{{\overline K}}'(\hat b(n))}{\varphi_{{\overline K}}(\hat b(n))\psi'(\hat b(n))-\varphi_{{\overline K}}'(\hat b(n))\psi(\hat b(n))}
\end{align}
and 
\begin{align} 
D_4(n)= 
\frac{-\frac{\overline K}{nr}\psi'(\hat b(n))}{\varphi_{{\overline K}}(\hat b(n))\psi'(\hat b(n)) -\varphi_{{\overline K}}'(\hat b(n))\psi(\hat b(n))}.
\end{align}

In order to show that $nV(x;n)$ is decreasing in $n$ we now show that $nD_1(n)$ and $nD_4(n)$ are decreasing in $n$. We begin with $nD_1(n)$. Since $\hat b(n)$ is decreasing in $n$ (Theorem~\ref{competition}), it suffices to show that $\frac{\varphi_{{\overline K}}'(x)}{\varphi_{{\overline K}}(x)\psi'(x)-\varphi_{{\overline K}}'(x)}$ is decreasing for $x\in(0,\hat b(1))$, which is equivalent to 
$$
\psi''(x)/\psi'(x)-\varphi''_{{\overline K}}(x)/\varphi'_{{\overline K}}(x) \leq 0.
$$
However, 
\begin{align}\label{thm:fixedbarKEQ}
\frac{\sigma^2(x)}{2}(\psi''(x)/\psi'(x)-
\varphi''_{{\overline K}}(x)/\varphi'_{{\overline K}}(x))=rf(x)-\overline K\leq 0
\end{align}
for $x\in(0,\hat b(1))$ (cf. the proof of Lemma \ref{lem:bhat-is-pos-sol}), so $nD_1(n)$ is decreasing in $n$. For $nD_4(n)$ we want to show for $x\in(0,\hat b(1))$, that $\frac{\psi'(x)}{\varphi_{{\overline K}}(x)\psi'(x) -\varphi_{{\overline K}}'(x)\psi(x)}$ is decreasing, which is equivalent to 
$$\psi'(x)\varphi_{{\overline K}}''(x) -\varphi_{{\overline K}}'(x)\psi''(x)\leq 0.$$
Hence, also in this case \eqref{thm:fixedbarKEQ} gives us what is needed. 

Now, since $nD_1(n)$ and $nD_4(n)$ are decreasing in $n$, it follows that $mV(x;m)\geq nV(x;n)$ for 
$x\in[0,\hat b(n)]\cup [\hat b(m),\infty)$ if $m<n$. Moreover, on $[\hat b(n),\hat b(m)]$ we have that the function
$H(x):=mV(x;m)-nV(x;n)$ satisfies 
\[\frac{\sigma^2}{2}H''+(\mu-\overline K) H'-rH= -\overline K(mV'(x;m)-1)\leq 0\]
since $V'(x;m)\geq 1$ on $[\hat b(n),\hat b(m)]$. Since $H>0$ at the boundary points $\{\hat b(n),\hat b(m)\}$, a maximum principle gives $H\geq 0$, so $mV(x;m)\geq nV(x;n)$ everywhere.
\end{proof}

\begin{rem} \label{rem:non-mon}
In the single-agent game, the optimal threshold (given by $\hat b=\hat b(K)$ determined with $n=1$) can be shown to be increasing as a function of the maximal extraction rate $K$. Indeed, implicit differentiation of the relation
$f(\hat b(K))=K/r$ yields
\[\hat b'(K)=\frac{\sigma^2(\hat b)\psi'(\hat b)}{2K(\mu(\hat b)\psi'(\hat b)-r\psi(\hat b))}\geq 0,\]
where the inequality is a consequence of $\psi''(\hat b)\leq 0$.
In the competitive setting of the current paper (where we may have $n\geq 2$), however, the effect of an increase in the extraction rate is ambiguous as it facilitates fast extraction not only for you but also for your competitors. We therefore do not expect any monotone relationship between $\hat b$ and $K$; see Figure \ref{sym:fig3} for an illustration of this relationship in the case of constant coefficients. 
\end{rem}
 
 
\subsection{The case of constant coefficients}\label{sec:constant-coeff-sym}
Consider the case of constant drift and  diffusion coefficients $\mu>0$ and $\sigma>0$.  In this case
\begin{align}
\psi(x) = \frac{e^{\alpha x}- e^{ \beta x}}{\alpha-\beta} 
\enskip \mbox{ and } \enskip
\varphi_{nK}(x)=e^{\gamma x}
\end{align}
where 
$\alpha:= -\frac{\mu}{\sigma^2} + \sqrt{\frac{\mu^2}{\sigma^4}+\frac{2r}{\sigma^2}}>0$, 
$\beta:= -\frac{\mu}{\sigma^2} - \sqrt{\frac{\mu^2}{\sigma^4}+\frac{2r}{\sigma^2}}<0$
and 
$\gamma:=-\frac{\mu-nK}{\sigma^2} - \sqrt{\frac{(\mu-nK)^2}{\sigma^4}+\frac{2r}{\sigma^2}}<0$.
Let us use Theorem \ref{verifi-thm} to find the Nash equilibrium for this model. First note that condition \eqref{cond} is equivalent to
\begin{align} \label{cond:example1}
\frac{K}{r}\left(\frac{\mu-nK}{\sigma^2} +\sqrt{\frac{(\mu-nK)^2}{\sigma^4}+\frac{2r}{\sigma^2}}\right)>1.
\end{align}
Hence, if \eqref{cond:example1} does not hold, then $\hat b=0$ is the equilibrium threshold, and if \eqref{cond:example1} holds, then the equilibrium threshold is strictly positive and determined by the equation $\frac{e^{\alpha \hat b }- e^{\beta \hat b }}{\alpha e^{\alpha \hat b }- \beta e^{\beta \hat b }} - \frac{1}{\gamma} =\frac{K}{r}$, cf. \eqref{b}. In the latter case, we find
\begin{align}
\hat b = \frac{\ln{\left(\frac{1-(K/r+1/\gamma)\beta}{1-(K/r+1/\gamma)\alpha}\right)}}{\alpha-\beta}.
\end{align}
Moreover, \eqref{eqvalbhatnot0} yields a closed formula for the equilibrium value function $V$. Figures \ref{sym:fig1}--\ref{sym:fig3}, illustrate the monotonicities in $K$ and $n$ obtained in Theorem \ref{competition} and Theorem \ref{incrcomp}, as well as the non-monotonicity of $\hat b$ in $K$, cf. Remark \ref{rem:non-mon}. The remaining parameters are held fixed, with
					$\mu=4$, 
					$\sigma^2=2$  
					and $r=0.05$.

\begin{figure}[H]
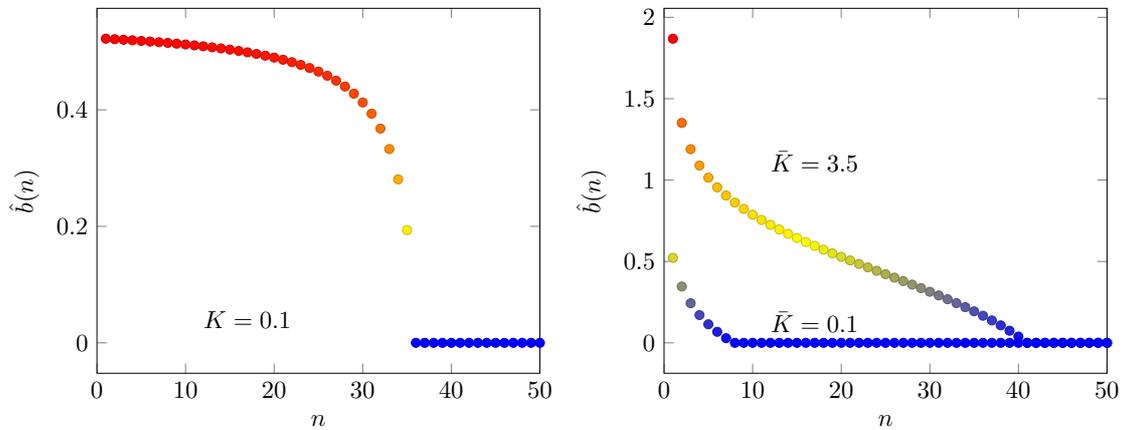

	\center

\caption{
The equilibrium threshold $\hat b$ as a function of the number of competitors $n$ when the individual maximal extraction rate $K$ is fixed (left), and when the total maximal extraction rate $\bar K$ is fixed (right), respectively.} \label{sym:fig2}
\end{figure}


\begin{figure}[H]
\center
\begin{tikzpicture}[scale=0.85] 
		\begin{axis}[xmin=0, xmax=0.15,xlabel={$K$}, ylabel={$\hat b(K)$}, xtick distance = 0.05]
				\addplot[style=solid]		
		coordinates{
(0,0)
(0.0005,0)
(0.001,0)
(0.0015,0)
(0.002,0)
(0.0025,0)
(0.003,0)
(0.0035,0)
(0.004,0)
(0.0045,0)
(0.005,0)
(0.0055,0)
(0.006,0)
(0.0065,0)
(0.007,0)
(0.0075,0)
(0.008,0)
(0.0085,0)
(0.009,0)
(0.0095,0)
(0.01,0)
(0.0105,0)
(0.011,0)
(0.0115,0)
(0.012,0)
(0.0125,0)
(0.013,0)
(0.0135,0)
(0.014,0.00174559715589583)
(0.0145,0.0103719012334778)
(0.015,0.0187015245648502)
(0.0155,0.026753841832146)
(0.016,0.0345463802389636)
(0.0165,0.04209504695137)
(0.017,0.0494143225751019)
(0.0175,0.0565174265788691)
(0.018,0.0634164594076385)
(0.0185,0.0701225251194855)
(0.019,0.0766458376636224)
(0.0195,0.0829958133500885)
(0.02,0.0891811516093912)
(0.0205,0.0952099057775575)
(0.021,0.101089545349197)
(0.0215,0.106827010903482)
(0.022,0.112428762713979)
(0.0225,0.117900823894177)
(0.023,0.123248818799472)
(0.0235,0.12847800729781)
(0.024,0.133593315430964)
(0.0245,0.138599362913026)
(0.025,0.143500487849565)
(0.0255,0.148300769007696)
(0.026,0.153004045922423)
(0.0265,0.157613937086557)
(0.027,0.162133856439147)
(0.0275,0.166567028339723)
(0.028,0.170916501192064)
(0.0285,0.175185159860869)
(0.029,0.179375737007279)
(0.0295,0.183490823454109)
(0.03,0.187532877678608)
(0.0305,0.191504234519237)
(0.031,0.19540711317312)
(0.0315,0.199243624552212)
(0.032,0.203015778058748)
(0.0325,0.206725487833945)
(0.033,0.210374578528166)
(0.0335,0.213964790635666)
(0.034,0.217497785432586)
(0.0345,0.220975149552886)
(0.035,0.224398399233417)
(0.0355,0.227768984256246)
(0.036,0.231088291613567)
(0.0365,0.2343576489181)
(0.037,0.237578327579697)
(0.0375,0.240751545766909)
(0.038,0.243878471170539)
(0.0385,0.246960223584648)
(0.039,0.249997877319064)
(0.0395,0.2529924634562)
(0.04,0.255944971963862)
(0.0405,0.258856353674688)
(0.041,0.261727522141964)
(0.0415,0.264559355380716)
(0.042,0.267352697502237)
(0.0425,0.270108360249513)
(0.043,0.272827124440428)
(0.0435,0.275509741325024)
(0.044,0.278156933862623)
(0.0445,0.280769397924129)
(0.045,0.283347803424423)
(0.0455,0.285892795389364)
(0.046,0.288404994961581)
(0.0465,0.290885000348885)
(0.047,0.293333387718867)
(0.0475,0.295750712042977)
(0.048,0.298137507893095)
(0.0485,0.300494290193432)
(0.049,0.30282155493035)
(0.0495,0.305119779822512)
(0.05,0.307389424953593)
(0.0505,0.309630933369631)
(0.051,0.311844731642923)
(0.0515,0.314031230404251)
(0.052,0.316190824845089)
(0.0525,0.318323895191317)
(0.053,0.320430807149853)
(0.0535,0.322511912329532)
(0.054,0.324567548637433)
(0.0545,0.326598040651791)
(0.055,0.328603699972528)
(0.0555,0.33058482555038)
(0.056,0.332541703995489)
(0.0565,0.334474609866303)
(0.057,0.336383805939523)
(0.0575,0.33826954346179)
(0.058,0.340132062383753)
(0.0585,0.341971591577098)
(0.059,0.343788349035051)
(0.0595,0.345582542056836)
(0.06,0.347354367416532)
(0.0605,0.349104011516671)
(0.061,0.350831650526955)
(0.0615,0.352537450508355)
(0.062,0.35422156752285)
(0.0625,0.355884147729021)
(0.063,0.357525327463664)
(0.0635,0.359145233309548)
(0.064,0.360743982149412)
(0.0645,0.362321681206255)
(0.065,0.363878428069922)
(0.0655,0.365414310709966)
(0.066,0.366929407474725)
(0.0665,0.368423787076491)
(0.067,0.369897508562644)
(0.0675,0.371350621272556)
(0.068,0.372783164780028)
(0.0685,0.374195168821003)
(0.069,0.375586653206223)
(0.0695,0.37695762771847)
(0.07,0.378308091993983)
(0.0705,0.379638035387575)
(0.071,0.380947436820942)
(0.0715,0.382236264613587)
(0.072,0.383504476295724)
(0.0725,0.384752018402466)
(0.073,0.385978826248539)
(0.0735,0.387184823682679)
(0.074,0.388369922820819)
(0.0745,0.389534023757052)
(0.075,0.390677014251328)
(0.0755,0.391798769392682)
(0.076,0.392899151236755)
(0.0765,0.393978008416217)
(0.077,0.395035175722607)
(0.0775,0.396070473657991)
(0.078,0.397083707954676)
(0.0785,0.398074669061094)
(0.079,0.399043131591823)
(0.0795,0.399988853739508)
(0.08,0.400911576646306)
(0.0805,0.401811023732228)
(0.081,0.40268689997759)
(0.0815,0.403538891156494)
(0.082,0.404366663018048)
(0.0825,0.405169860411704)
(0.083,0.405948106352843)
(0.0835,0.406701001024347)
(0.084,0.407428120709546)
(0.0845,0.408129016651552)
(0.085,0.408803213833492)
(0.0855,0.409450209673717)
(0.086,0.4100694726295)
(0.0865,0.410660440702142)
(0.087,0.411222519835782)
(0.0875,0.411755082201442)
(0.088,0.41225746435709)
(0.0885,0.4127289652736)
(0.089,0.413168844215509)
(0.0895,0.413576318464403)
(0.09,0.413950560871568)
(0.0905,0.414290697225182)
(0.091,0.414595803415863)
(0.0915,0.414864902382721)
(0.092,0.415096960820189)
(0.0925,0.415290885623873)
(0.093,0.415445520051304)
(0.0935,0.415559639570884)
(0.094,0.415631947369383)
(0.0945,0.415661069485056)
(0.095,0.415645549529706)
(0.0955,0.415583842958842)
(0.096,0.415474310844296)
(0.0965,0.415315213098289)
(0.097,0.415104701091784)
(0.0975,0.414840809602991)
(0.098,0.414521448023919)
(0.0985,0.414144390743743)
(0.099,0.413707266617328)
(0.0995,0.413207547415234)
(0.1,0.412642535137699)
(0.1005,0.412009348059159)
(0.101,0.411304905351344)
(0.1015,0.410525910111585)
(0.102,0.409668830597946)
(0.1025,0.40872987944366)
(0.103,0.407704990589179)
(0.1035,0.406589793629993)
(0.104,0.405379585231063)
(0.1045,0.404069297202683)
(0.105,0.40265346076609)
(0.1055,0.401126166457805)
(0.106,0.399481019026806)
(0.1065,0.397711086564453)
(0.107,0.395808842969325)
(0.1075,0.393766102681929)
(0.108,0.391573946420425)
(0.1085,0.389222636398774)
(0.109,0.38670151920096)
(0.1095,0.383998914103595)
(0.11,0.381101984163803)
(0.1105,0.377996586792692)
(0.111,0.374667099781156)
(0.1115,0.371096217785808)
(0.112,0.367264713053408)
(0.1125,0.363151152572916)
(0.113,0.358731561772067)
(0.1135,0.353979022148277)
(0.114,0.348863186598884)
(0.1145,0.343349691346038)
(0.115,0.337399436733887)
(0.1155,0.330967700070204)
(0.116,0.324003030986365)
(0.1165,0.316445861819841)
(0.117,0.308226739683889)
(0.1175,0.299264049075824)
(0.118,0.289461037459307)
(0.1185,0.278701870262015)
(0.119,0.266846307480771)
(0.1195,0.253722378805197)
(0.12,0.239116078313336)
(0.1205,0.222756490841346)
(0.121,0.204293677652746)
(0.1215,0.183264625666437)
(0.122,0.159038575627008)
(0.1225,0.130724641665521)
(0.123,0.0970054273884855)
(0.1235,0.0558116565336994)
(0.124,0.00361140691020194)
(0.1245,0)
(0.125,0)
(0.1255,0)
(0.126,0)
(0.1265,0)
(0.127,0)
(0.1275,0)
(0.128,0)
(0.1285,0)
(0.129,0)
(0.1295,0)
(0.13,0)
(0.1305,0)
(0.131,0)
(0.1315,0)
(0.132,0)
(0.1325,0)
(0.133,0)
(0.1335,0)
(0.134,0)
(0.1345,0)
(0.135,0)
(0.1355,0)
(0.136,0)
(0.1365,0)
(0.137,0)
(0.1375,0)
(0.138,0)
(0.1385,0)
(0.139,0)
(0.1395,0)
(0.14,0)
(0.1405,0)
(0.141,0)
(0.1415,0)
(0.142,0)
(0.1425,0)
(0.143,0)
(0.1435,0)
(0.144,0)
(0.1445,0)
(0.145,0)
(0.1455,0)
(0.146,0)
(0.1465,0)
(0.147,0)
(0.1475,0)
(0.148,0)
(0.1485,0)
(0.149,0)
(0.1495,0)
(0.15,0)
};
		\end{axis} 
	\end{tikzpicture} 
%
%
%
%
%
%
%
		%
\caption{The equilibrium threshold $\hat b$ as a function of the individual extraction rate $K$, with the number of competitors fixed at $n=30$. 
} \label{sym:fig3}
\end{figure}
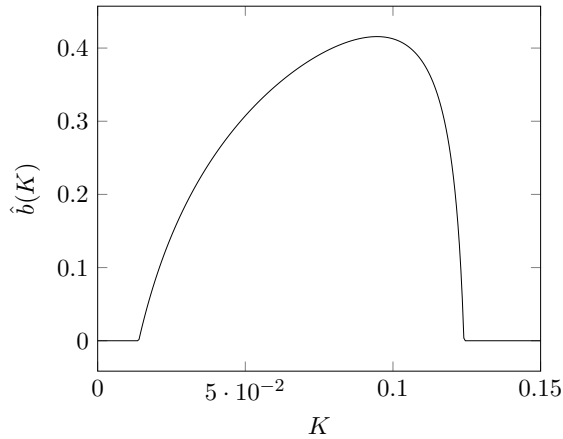

%
%
%
%
%
%
%
%
%
%
%


\section{Individual extraction rates}\label{sec:Individual}

In this section we consider the asymmetric case when maximal extraction rates are not necessarily identical. In particular, a Markovian control 
$\lambda_{it}=\lambda_i(X_t)$ is in this section an admissible strategy for agent $i$ if it takes values in $[0,K_i]$, where $K_1,...,K_n$ are given positive constants.
Again we will search for Nash equilibria of threshold type, which in this section means that
for all $i=1,...,n$ we have
\begin{align} 
\lambda_{it}=K_iI_{\{ X_t\geq b_i\}}
\enskip \mbox{for some constant $b_i\geq0$ and all $t\geq0$.}
\end{align}
In Section \ref{one-agent-problem:sec} we report a verification type result for the asymmetric game. 
In Section~\ref{oneNE-nonidentical:sec} we establish the existence of a Nash equilibrium of threshold type using a fixed-point argument. The corresponding thresholds are shown to be ordered in the same way as the maximal extraction rates.


\subsection{A single-agent problem and a verification result} \label{one-agent-problem:sec}
By definition a threshold strategy $(\hat b_1,...,\hat b_n)$ is Nash equilibrium if the threshold strategy corresponding to $\hat b_i$ is optimal (in the usual sense) for any agent $i$ in case the other agents use the fixed threshold strategy $(\hat b_1,...,\hat b_{i-1},\hat b_{i+1},...,\hat b_n)$ (we use this notation to mean the threshold strategy obtained when removing the threshold of agent $i$ also in case $i=1$ or $i=n$). 
Thus, all we need in order to verify that a proposed threshold equilibrium is indeed an equilibrium is to solve a single-agent version of the problem investigated in the previous section, but with the complication of a drift function with negative jumps; in particular, the problem of selecting an admissible strategy $\lambda_{i}=(\lambda_{it})_{t\geq 0}$ for agent $i$ in 
\begin{align}\label{stateprocesszyzyzy}
dX_t =  \left(\mu(X_t)- \sum_{j\neq i}K_jI_{\{ X_t\geq b_j\}}- \lambda_{nt}\right)dt + \sigma(X_t) dB_t, 
\end{align}
in order to maximize the reward 
\begin{align}  \label{stateprocesszyzyzy2}
\mathbb{E}_x\left(\int_0^{\tau} e^{-rt}{\lambda_{it}}\,dt\right), 
\end{align}
for fixed constants $b_1,...,b_{i-1},b_{i+1},...,b_n\in[0,\infty), K_1,...,K_n \in (0,\infty)$. 




In line with Section~\ref{sec:problem:ident}, we thus denote by $\psi^{(i)}(x)=\psi^{(i)}(x;b_1,...,b_{i-1},b_{i+1},...,b_n)$ the increasing positive solution of 
\begin{align}\label{h-ode-with-kinks}
& \frac{1}{2}\sigma^2(x)h''(x)+\left(\mu(x)-\sum_{j \neq i}K_jI_{\{x\geq b_j\}}\right)h'(x) 
-rh(x)=0,
\end{align}
and by $\varphi^{(i)}(x)=\varphi^{(i)}(x;b_1,...,b_{i-1},b_{i+1},...,b_n)$ the decreasing positive solution of
\begin{align}\label{h-ode-with-kinks2}
& \frac{1}{2}\sigma^2(x)h''(x)+\left(\mu(x)-K_i-\sum_{j\neq i}K_jI_{\{x\geq b_j\}}\right)h'(x) 
-rh(x)=0,
\end{align}
with the boundary specifications $\psi^{(i)}(0)=\varphi^{(i)}(\infty)=0$, ${\psi^{(i)}}'(0)=1$ and $\varphi^{(i)}(\infty)(0)=1$. Note that these functions are analogous to $\psi$ and $\varphi_{nK}$ of the previous section, but due to the discontinuities of the drift function we now  
merely have that
$\psi^{(i)},\varphi^{(i)}\in \mathcal C^1(0,\infty) \cap \mathcal C^2\left([0,\infty)\backslash\{b_1,...,b_{i-1},b_{i+1},...,b_n\}\right)$.
However, since the jumps of the drift are negative, we may use approximation by 
drift coefficients for which \ref{coeff-assum:1} and \ref{coeff-assum:3} are valid.
More specifically, 
consider drift coefficients $\{\mu_k\}_{k=1}^\infty$ satisfying \ref{coeff-assum:1} and \ref{coeff-assum:3}, and with $\mu_k\uparrow\mu-\sum_{j \neq i}K_jI_{\{\cdot\geq b_j\}}$ pointwise.
We then see that the conclusions of Lemma~\ref{lemma1} still hold. In fact, the corresponding fundamental solutions converge pointwise from above to 
$\psi^{(i)}$ and $\varphi^{(i)}$, as $k\to\infty$. 
Using that the pointwise limit of concave/convex functions is again concave/convex, 
Lemma~\ref{lemma1} extends as follows.

\begin{lem}\label{lemma2:nonidentical} 
\begin{enumerate} [label=(\roman*)]
\item
Each function $\psi^{(i)}$ is (strictly) concave-convex on $[0,\infty)$ with a unique inflection point $b^{**}_i\in[0,\infty)$. In particular, if  $\mu(0)-\sum_{j\neq i}K_jI_{\{ b_j=0\}}>0$ then $b_i^{**}>0$, and otherwise $b_i^{**}=0$ and $\psi^{(i)}$ is convex everywhere. 
\item
Each function $\varphi^{(i)}$ is convex.
\end{enumerate}
\end{lem}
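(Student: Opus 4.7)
The plan is to follow the approximation scheme indicated just before the lemma. Pick a sequence $\{\mu_k\}_{k=1}^\infty \subset \mathcal C^1([0,\infty))$ satisfying \ref{coeff-assum:1} and \ref{coeff-assum:3} with $\mu_k \uparrow \tilde\mu_i := \mu - \sum_{j\neq i}K_j I_{\{\cdot\geq b_j\}}$ pointwise, and denote by $\psi_k$ and $\varphi_k$ the increasing and decreasing fundamental solutions associated with $\mu_k$ and $\mu_k-K_i$ respectively (normalized as in Section~2). By Lemma~\ref{lemma1}, each $\psi_k$ is strictly concave-convex on $[0,\infty)$ with a unique inflection point $b_k^\ast\in[0,\infty)$, and each $\varphi_k$ is convex. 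As noted in the excerpt, $\psi_k\downarrow\psi^{(i)}$ and $\varphi_k\downarrow\varphi^{(i)}$ pointwise. Part~(ii) then follows at once, since a pointwise limit of convex functions is convex.

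For part~(i), pass to a subsequence so that $b_k^\ast\to b_i^{\ast\ast}\in[0,\infty]$. For any $b<b_i^{\ast\ast}$ we eventually have $b<b_k^\ast$, so $\psi_k$ is concave on $[0,b]$; hence $\psi^{(i)}$ is concave on $[0,b_i^{\ast\ast})$. Likewise $\psi^{(i)}$ is convex on $(b_i^{\ast\ast},\infty)$. To exclude $b_i^{\ast\ast}=\infty$, I would repeat the estimate from the proof of Lemma~\ref{lemma1}: global concavity of $\psi^{(i)}$ together with $\tilde\mu_i\leq\mu$ and \ref{coeff-assum:3} would give $0\geq(rx-\mu^+)\psi^{(i)}/x>0$ for $x$ large, a contradiction. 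To identify when $b_i^{\ast\ast}=0$, evaluate the ODE satisfied by $\psi^{(i)}$ at $x=0+$: using $(\psi^{(i)})'(0)=1$ and $\psi^{(i)}(0)=0$ one obtains
\[
\tfrac{1}{2}\sigma^2(0)(\psi^{(i)})''(0+) = -\Bigl(\mu(0)-\sum_{j\neq i}K_j I_{\{b_j=0\}}\Bigr),
\]
so $b_i^{\ast\ast}>0$ precisely when the bracketed quantity is positive.

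The main obstacle is the strictness claim, i.e.\ the uniqueness of the inflection point. Observe that $\psi^{(i)}$ is $\mathcal C^2$ on $[0,\infty)\setminus\{b_j:j\neq i\}$ and on each such open subinterval satisfies a linear ODE with $\mathcal C^1$ coefficients. The argument from the proof of Lemma~\ref{lemma1} then shows that, on each such interval, if $(\psi^{(i)})''$ vanishes from below then $(\psi^{(i)})'''>0$ at that point, so $(\psi^{(i)})''$ cannot return to a nonpositive value further along the same interval. Moreover, subtracting the ODEs on either side of a jump point $b_j$ yields
\[
\tfrac{1}{2}\sigma^2(b_j)\bigl[(\psi^{(i)})''(b_j+)-(\psi^{(i)})''(b_j-)\bigr]=K_j(\psi^{(i)})'(b_j)>0,
\]
so $(\psi^{(i)})''$ has an upward jump at each $b_j$. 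Combining these two facts, $(\psi^{(i)})''$ can change sign from negative to positive at most once, which delivers the unique inflection point $b_i^{\ast\ast}$ and the strict concave-convex structure.
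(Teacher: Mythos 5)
Your proposal is correct and follows the route the paper itself indicates: approximate the discontinuous drift from below by smooth drifts satisfying \ref{coeff-assum:1} and \ref{coeff-assum:3}, invoke Lemma~\ref{lemma1} for the approximants, and pass to the pointwise limit. The paper in fact offers nothing beyond this limit argument ("the pointwise limit of concave/convex functions is again concave/convex"), which, as you rightly observe, only delivers \emph{weak} concavity on $[0,b_i^{**})$ and convexity on $(b_i^{**},\infty)$ and not the strict concave-convex structure with a unique inflection point. Your supplementary direct argument closes exactly this gap: on each interval between jump points the third-derivative computation from the proof of Lemma~\ref{lemma1} forbids $(\psi^{(i)})''$ from returning to nonpositive values, and the matching condition $\tfrac12\sigma^2(b_j)\bigl[(\psi^{(i)})''(b_j+)-(\psi^{(i)})''(b_j-)\bigr]=K_j(\psi^{(i)})'(b_j)>0$ shows the second derivative can only jump upward, so a single sign change survives the pasting. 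The evaluation of the ODE at $0+$ and the exclusion of $b_i^{**}=\infty$ likewise transfer correctly (the latter at large $x$ away from the finitely many jump points). The only cosmetic remark is that the boundary case $\mu(0)-\sum_{j\neq i}K_jI_{\{b_j=0\}}=0$ needs the same third-derivative evaluation at $0+$ used in Lemma~\ref{lemma1} to conclude $b_i^{**}=0$; this is immediate from your interval argument. Net effect: your write-up is a strictly more complete proof than the one the paper supplies.
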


\begin{rem}\label{rem:identify-f-g}
In practice, each $\psi^{(i)}$ can be found by 
(i) identifying the fundamental increasing and decreasing solutions to \eqref{h-ode-with-kinks} separately on the intervals on which the drift function does not jump, 
(ii) on each interval forming linear combinations of the increasing and decreasing fundamental solutions,  
(iii) pasting the linear combinations together to form a function $\mathbb{R}_+\rightarrow \mathbb{R}$,   
(iv) choosing the constants of the linear combinations so that the resulting $\mathbb{R}_+\rightarrow \mathbb{R}$ function is continuously differentiable and the boundary conditions are satisfied; this function is then $\psi^{(i)}$. We can similarly obtain $\varphi^{(i)}$. 
\end{rem}

\begin{rem}
As in Remark~\ref{remark-optimal-div}, the inflection point $b^{**}_i$ corresponds to the optimal barrier policy in case the maximal push rate of agent $i$ is $K_i=\infty$.
\end{rem}

Relying on Lemma \ref{lemma2:nonidentical}, the reasoning in Section~\ref{sec:problem:ident} extends to the present case. 
In particular, by relying on a single-agent version of Theorem \ref{sec:verthm1} for the present setting, we can verify if it is optimal for any agent $i$ to comply with a proposed threshold equilibrium. We thus obtain the following verification type result. 

\begin{thm} \label{veri-thm-asym} Suppose a threshold strategy $(\hat b_1,..., \hat b_n)$ satisfies $V_i'(\hat b_i) = 1$ if $\hat b_i >0$ and 
$V_i'(\hat b_i) \leq 1$ if $\hat b_i =0$, 
for $i=1,...,n$, where
\begin{equation} \label{value-funcXz}
V_i(x) := 
\begin{cases} 
E_{1,i} \psi^{(i)}(x) 	&  0\leq x \leq \hat b_i \\
E_{4,i} \varphi^{(i)}(x) + \frac{K_i}{r}					&  x\geq \hat b_i ,\\
\end{cases}
\end{equation}
and  
\begin{align}\label{E}
E_{1,i}&:= \frac{-\frac{K_i}{r}{\varphi^{(i)}}'(\hat b_i)}{\varphi^{(i)}(\hat b_i){\psi^{(i)}}'(\hat b_i)-{\varphi^{(i)}}'(\hat b_i){\psi^{(i)}}(\hat b_i)},\\
E_{4,i}&:=  \frac{-\frac{K_i}{r}{\psi^{(i)}}'(\hat b_i)}{\varphi^{(i)}(\hat b_i){\psi^{(i)}}'(\hat b_i)-{\varphi^{(i)}}'(\hat b_i){\psi^{(i)}}(\hat b_i)}.
\end{align}
Then, $(\hat b_1,..., \hat b_n)$ is a threshold Nash equilibrium and $V_i(x),i=1,...,n$ are the corresponding equilibrium value functions.  
\end{thm}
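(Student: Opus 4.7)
The plan is to adapt the verification proof of Theorem \ref{verifi-thm} to the asymmetric setting, essentially by reducing the equilibrium property to a single-agent optimality statement. Fix an agent $i$, suppose the remaining agents use their thresholds $\hat b_j$, $j\neq i$, and let $\lambda$ be an arbitrary admissible deviation for agent $i$, so that $X$ evolves according to \eqref{stateprocesszyzyzy}. The goal is to show $V_i(x)\geq J_i(x;(\lambda,\boldsymbol{\hat\lambda}_{-i}))$ with equality when $\lambda(x)=K_iI_{\{x\geq\hat b_i\}}$.

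First I would record the regularity and sign structure of $V_i$. By construction, the constants $E_{1,i},E_{4,i}$ ensure continuity at $\hat b_i$, and the assumed condition $V_i'(\hat b_i)=1$ (or $V_i'(0)\leq 1$ when $\hat b_i=0$) makes $V_i$ lie in $\mathcal C^1([0,\infty))$. Away from the finite set $\{\hat b_1,\dots,\hat b_n\}$, the function $V_i$ is $\mathcal C^2$ and satisfies the linear ODE
\[
\tfrac12\sigma^2(x)V_i''(x)+\bigl(\mu(x)-\sum_{j\neq i}K_jI_{\{x\geq\hat b_j\}}-K_iI_{\{x\geq\hat b_i\}}\bigr)V_i'(x)-rV_i(x)+K_iI_{\{x\geq\hat b_i\}}=0,
\]
as follows from \eqref{h-ode-with-kinks}--\eqref{h-ode-with-kinks2} and the definitions of $E_{1,i},E_{4,i}$. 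I would then establish concavity of $V_i$ using Lemma \ref{lemma2:nonidentical}: on $[\hat b_i,\infty)$, $V_i=E_{4,i}\varphi^{(i)}+K_i/r$ with $E_{4,i}<0$ and $\varphi^{(i)}$ convex (hence $V_i$ is concave), while on $[0,\hat b_i]$, $V_i=E_{1,i}\psi^{(i)}$, and a calculation analogous to that in Lemma \ref{lem:bhat-is-pos-sol} shows that $\hat b_i$ must lie at or below the inflection point $b_i^{**}$ of $\psi^{(i)}$, giving concavity on this piece; the smooth-fit condition $V_i'(\hat b_i)=1$ then yields global concavity.

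Next I would apply It\^o's formula to $e^{-rt}V_i(X_t)$, localised via $\theta_t:=\inf\{s\geq 0:X_s\geq t\}\wedge t$. Since $V_i\in\mathcal C^1$ with an absolutely continuous derivative whose jumps at the $\hat b_j$ are finite, the local-time terms vanish and the usual It\^o formula applies (alternatively, approximate the piecewise drift by smooth drifts as in the discussion before Lemma \ref{lemma2:nonidentical} and pass to the limit). Using the displayed ODE above, the drift of $e^{-rt}V_i(X_t)$ reduces to
\[
e^{-rs}\bigl((V_i'(X_s)-1)(K_iI_{\{X_s\geq\hat b_i\}}-\lambda(X_s))-\lambda(X_s)\bigr),
\]
and concavity of $V_i$ combined with $V_i'(\hat b_i)=1$ (respectively $V_i'(0)\leq 1$ if $\hat b_i=0$) yields $V_i'\geq 1$ on $[0,\hat b_i]$ and $V_i'\leq 1$ on $[\hat b_i,\infty)$, hence
\[
(V_i'(x)-1)(K_iI_{\{x\geq\hat b_i\}}-\lambda)\leq 0\quad\text{for all }\lambda\in[0,K_i].
\]
Taking expectations, exploiting $V_i\geq 0$, and passing $t\to\infty$ by monotone/dominated convergence (using concavity of $V_i$ for domination of $e^{-r(\tau\wedge\theta_t)}V_i(X_{\tau\wedge\theta_t})$) gives $V_i(x)\geq J_i(x;(\lambda,\boldsymbol{\hat\lambda}_{-i}))$. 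Finally, choosing $\lambda(x)=K_iI_{\{x\geq\hat b_i\}}$ makes the integrand above identically zero, yielding $V_i(x)=J_i(x;(\hat b_1,\dots,\hat b_n))$.

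The main obstacle will be the reduced regularity of $V_i$: the drift discontinuities at the other agents' thresholds $\hat b_j$ produce jumps in $V_i''$, so one must justify both the It\^o calculation and the concavity argument in this non-smooth setting. Both are resolved by the approximation procedure outlined before Lemma \ref{lemma2:nonidentical}, where one replaces the piecewise-continuous drift by smooth drifts converging pointwise from below; the corresponding $\psi^{(i)},\varphi^{(i)}$ and value functions converge, preserving concavity and the ODE identities in the limit. A secondary point is the verification that $\hat b_i\leq b_i^{**}$, which is needed for concavity below the threshold and which follows by repeating the monotonicity argument of Lemma \ref{lem:bhat-is-pos-sol} with $\psi^{(i)}$ and $\varphi^{(i)}$ in place of $\psi$ and $\varphi_{nK}$.
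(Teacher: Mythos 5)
Your proposal is correct and follows essentially the same route as the paper, which proves this theorem precisely by transplanting the verification argument of Theorem \ref{verifi-thm} to the single-agent problem \eqref{stateprocesszyzyzy}--\eqref{stateprocesszyzyzy2}, using Lemma \ref{lemma2:nonidentical} (obtained via the smooth-drift approximation) for the concave--convex structure of $\psi^{(i)}$ and the convexity of $\varphi^{(i)}$, and a repeat of the Lemma \ref{lem:bhat-is-pos-sol} computation to place $\hat b_i$ at or below the inflection point $b_i^{**}$. In fact your write-up makes explicit several steps (the generalized It\^o formula across the kinks at the $\hat b_j$, the concavity of $V_i$) that the paper only gestures at when it states that ``the reasoning in Section \ref{sec:problem:ident} extends to the present case.''
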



We will now report a boundedness and continuity result that is essential for the fixed-point argument in Section \ref{oneNE-nonidentical:sec}. 
Consider the problem stated in the beginning of this section, see \eqref{stateprocesszyzyzy}--\eqref{stateprocesszyzyzy2}. Relying again on Lemma \ref{lemma2:nonidentical} and reasoning similar to that of Section~\ref{sec:problem:ident} we find, as noted above, that the solution is a threshold strategy. 
In particular, if agents $1,...,i-1,i+1,...,n$ use a fixed strategy $(b_1,...,b_{i-1},b_{i+1},..., b_n)$, then the optimal strategy of agent $i$ is a threshold strategy, which we denote by $b^Z=b^Z(b_1,...,b_{i-1},b_{i+1},..., b_n)$. This induces mappings
\begin{align}\label{optimalresponsemapping}
(b_1,...,b_{i-1},b_{i+1},..., b_n) \in \mathbb{R}^{n-1}_+ \mapsto b_i^Z=b^Z_i(b_1,...,b_{i-1},b_{i+1},..., b_n),
\end{align}
for $i=1,...,n$. The interpretation is that  $b_i^Z$ is the optimal threshold strategy for agent $i$ under the assumption that the other agents use a fixed threshold strategy $(b_1,...,b_{i-1},b_{i+1},..., b_n)$. We remark that a more explicit interpretation of the mappings $b_i^Z$ can be extracted from the proof below.

\begin{prop} \label{b-Zbounded} For each $i=1,...,n$, the mapping \eqref{optimalresponsemapping}
is bounded by $c$ and continuous.
\end{prop}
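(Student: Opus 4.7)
The plan is to follow the template of Section \ref{sec:problem:ident} with the drift $\mu$ replaced by
\[
\mu^{(i)}(x):=\mu(x)-\sum_{j\neq i}K_j\,I_{\{x\geq b_j\}},
\]
and to treat boundedness and continuity separately. Write $\boldsymbol{b}_{-i}:=(b_1,\ldots,b_{i-1},b_{i+1},\ldots,b_n)$ for brevity.

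\textbf{Boundedness.} Exactly as in Section \ref{sec:problem:ident}, either $b_i^Z=0$ (when the natural analog of \eqref{cond0} holds) or $b_i^Z$ is the unique positive root of
\[
f_i(b;\boldsymbol{b}_{-i})\ :=\ \frac{\psi^{(i)}(b)}{{\psi^{(i)}}'(b)}-\frac{\varphi^{(i)}(b)}{{\varphi^{(i)}}'(b)}\ =\ \frac{K_i}{r},
\]
and in either case $b_i^Z\leq b_i^{**}$ with $b_i^{**}$ the inflection point of $\psi^{(i)}$ from Lemma \ref{lemma2:nonidentical}; the derivation of $b_i^Z\leq b_i^{**}$ is a verbatim rewriting of the corresponding step in Lemma \ref{lem:bhat-is-pos-sol}. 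It remains to show $b_i^{**}<c$. At $b_i^{**}$, the condition ${\psi^{(i)}}''(b_i^{**})=0$ and the ODE give $\mu^{(i)}(b_i^{**}){\psi^{(i)}}'(b_i^{**})=r\psi^{(i)}(b_i^{**})$. Concavity of $\psi^{(i)}$ on $[0,b_i^{**}]$ with $\psi^{(i)}(0)=0$ yields $\psi^{(i)}(b_i^{**})\geq b_i^{**}{\psi^{(i)}}'(b_i^{**})>0$, hence $\mu^{(i)}(b_i^{**})\geq r b_i^{**}$. Combining with $\mu^{(i)}\leq\mu$ and Assumption \ref{coeff-assum:3} (which gives $\mu(x)\leq rx-c^{-1}<rx$ for $x\geq c$) forces $b_i^{**}<c$, and so $b_i^Z<c$.

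\textbf{Continuity.} I would split this into two stages. The harder stage is to establish that $\psi^{(i)}$ and $\varphi^{(i)}$, together with their first derivatives, depend continuously on $\boldsymbol{b}_{-i}$, uniformly on compact subsets of $[0,\infty)$. Although $\mu^{(i)}$ has jumps at the (moving) points $b_j$, the solutions are $C^1$ and piecewise $C^2$. A convenient route is the piecewise construction of Remark \ref{rem:identify-f-g}: on each maximal interval between the ordered jump points, write the solution as a linear combination of two fundamental solutions of a \emph{smooth}-coefficient ODE, and propagate the coefficients across each jump point by matching value and first derivative. As $\boldsymbol{b}_{-i}^{(m)}\to\boldsymbol{b}_{-i}$, both the interval endpoints and the pasting data depend continuously on the data, so the solutions and their derivatives converge uniformly on compacts. (Alternatively, one may smooth $\mu^{(i)}$ as in the approximation described before Lemma \ref{lemma2:nonidentical} and pass to the limit.) The second stage is then routine: $f_i$ is jointly continuous, and the analog of the monotonicity computation from Lemma \ref{lem:bhat-is-pos-sol} shows that $f_i(\,\cdot\,;\boldsymbol{b}_{-i})$ is strictly increasing on $\{f_i<K_i/r\}$. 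Continuity of $\boldsymbol{b}_{-i}\mapsto b_i^Z$ then follows from the implicit function theorem applied to $f_i(b_i^Z;\boldsymbol{b}_{-i})=K_i/r$ in the regime $b_i^Z>0$, and through the boundary case $b_i^Z=0$ by continuity of $f_i(0;\cdot)$ and the strict monotonicity of $f_i$ in $b$.

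The main obstacle is the continuous dependence of $\psi^{(i)}$ and $\varphi^{(i)}$ on $\boldsymbol{b}_{-i}$ despite the drift's discontinuities at the shifting points $b_j$. Once that is in hand, the monotonicity of $f_i$ together with the implicit function theorem completes the argument without further subtlety.
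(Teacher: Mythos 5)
Your argument is essentially the paper's own proof: boundedness is obtained by showing the inflection point $b_i^{**}$ of $\psi^{(i)}$ is at most $c$ via the ODE together with Assumption \ref{coeff-assum}\ref{coeff-assum:3} and the inequality $\psi^{(i)}(x)\geq x{\psi^{(i)}}'(x)$ on the concave part, and then using $b_i^Z\leq b_i^{**}$; continuity is deduced from the continuous dependence of $\psi^{(i)},\varphi^{(i)}$ on the other agents' thresholds together with the defining relation for $b_i^Z$. You supply more detail than the paper on the continuity step (the pasting construction and the monotonicity of $f_i$), which is a welcome elaboration rather than a deviation.
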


\begin{proof} Suppose without loss of generality that $i=n$. Let us first show that the function 
$(b_1,...,b_{n-1}) \mapsto b^{**}_n$ is bounded. Consider any $x>0$ with $x \neq b_i$ for all $i<n$ and suppose ${\psi^{(n)}}''(x)<0$. The function ${\psi^{(n)}}(y) - y{\psi^{(n)}}'(y)$ is zero for $y=0$ and increasing on $[0,x]$, so
${\psi^{(n)}}(x) \geq x{\psi^{(n)}}'(x)$ for such $x$. This implies that 
\begin{align}
\frac{1}{2}\sigma^2(x){\psi^{(n)}}''(x) 
& =  r{\psi^{(n)}}(x) - \left(\mu(x)-\sum_{i\leq n-1}K_iI_{\{x\geq b_i\}}\right){\psi^{(n)}}'(x)\\
& \geq \left(rx- \mu(x) + \sum_{i\leq n-1}K_iI_{\{x\geq b_i\}}\right){\psi^{(n)}}'(x).
\end{align}
By Assumption \ref{coeff-assum}\ref{coeff-assum:3}, the last expression above is positive whenever $x\geq c$; thus, in order for ${\psi^{(n)}}''(x)<0$ to hold we must have $x<c$. Consequently, $b_n^{**}\leq c$, and since $c$ does not depend on $b_1,...,b_{n-1}$, it follows that $(b_1,...,b_{n-1})  \mapsto b^{**}$ is bounded. 

It can now be seen (cf. the single-agent, $n=1$, version of Theorem \ref{sec:verthm1}), that the optimal value function corresponding to 
$b_n^Z=b^Z_n(b_1,...,b_{n-1})$ is 
\begin{equation} \label{non-indenticalSCprop2:valueF} 
Z(x):=
\begin{cases}
E_1 {\psi^{(n)}}(x)			&  0\leq x \leq b_n^Z \\
E_4 {\varphi^{(n)}}(x) + \frac{K_n}{r}					&  x\geq b_n^Z ,\\
\end{cases}
\end{equation} 
(where $E_1$ and $E_4$ are defined so that $Z$ is ${\cal C}^1$ at $b_n^Z$, i.e. analogously to the constants in Theorem \ref{veri-thm-asym}) and $b_n^Z \in (0,b_n^{**}]$ is the unique threshold satisfying $Z'(b^Z_n)=1$ 
in case 
\begin{align} \label{non-indenticalSCprop2:condition} 
{\varphi^{(n)}}'(0) < -\frac{r}{K_n}
\end{align}
and $b^{**}_n>0$ hold, and $b^Z_n=0$ otherwise. We thus find 
(i) since $b_n^Z\leq b^{**}_n$, it holds that $b_n^Z$ is bounded, and 
(ii) since the solutions ${\psi^{(n)}}$ and ${\varphi^{(n)}}$ to the ODEs \eqref{h-ode-with-kinks} and \eqref{h-ode-with-kinks2} (with $i=n$) depend continuously on the parameters $b_1,...,b_{n-1}$, it holds that the continuity of $b^Z_n$ can be obtained from the explicit defining relations above, see e.g. \eqref{non-indenticalSCprop2:valueF}. 
\end{proof}

\subsection{Existence of asymmetric threshold Nash equilibrium}\label{oneNE-nonidentical:sec}


\begin{thm} \label{thm:orderedNEasym}  Suppose, without loss of generality, that $K_n \geq... \geq K_1\geq0$. Then, there exists a threshold type Nash equilibrium $\boldsymbol{\hat \lambda}= (\hat b_1,...,\hat b_n)$
that satisfies $\hat b_n\geq...\geq\hat b_1\geq0$. Moreover, the corresponding equilibrium value functions are given by \eqref{value-funcXz} and satisfy $V_n(x)\geq... \geq V_1(x)$ 
for all $x\geq0$.
\end{thm}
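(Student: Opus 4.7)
The plan is to apply Brouwer's fixed-point theorem to the best-response operator on a suitable ordered domain. Let
$$
D = \{(b_1,\ldots,b_n) \in \mathbb{R}^n : 0 \leq b_1 \leq b_2 \leq \cdots \leq b_n \leq c\},
$$
where $c$ is the bound from Proposition \ref{b-Zbounded}. Then $D$ is nonempty, compact and convex. Define $T: D \to \mathbb{R}^n$ componentwise by $T_i(b) = b_i^Z(b_1,\ldots,b_{i-1},b_{i+1},\ldots,b_n)$, which by Proposition \ref{b-Zbounded} is continuous and takes values in $[0,c]^n$. The only nontrivial step for Brouwer to produce a threshold Nash equilibrium with the required ordering $\hat b_1\leq\cdots\leq \hat b_n$ (via Theorem \ref{veri-thm-asym}) is the verification that $T(D) \subseteq D$, i.e.\ that $T_i(b)\leq T_{i+1}(b)$ for every $b\in D$ and every $i<n$.

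I would show this using two monotonicity ingredients. First, with the competitors' strategies held fixed, the single-agent best response is nondecreasing in the agent's own maximal rate; this extends Remark \ref{rem:non-mon} to the piecewise-constant-drift setting by implicit differentiation of the asymmetric analog of $f^{(i)}(\hat b) = K_i/r$ from Lemma \ref{lem:bhat-is-pos-sol}, with the underlying concavity/convexity properties of $\psi^{(i)},\varphi^{(i)}$ supplied by Lemma \ref{lemma2:nonidentical}. Second, a central structural observation is that $\psi^{(i)}$ and $\psi^{(i+1)}$ coincide on $[0,b_i]$: the two drifts $\mu-\sum_{k\neq i}K_kI_{\{x\geq b_k\}}$ and $\mu-\sum_{k\neq i+1}K_kI_{\{x\geq b_k\}}$ agree whenever $x<b_i\leq b_{i+1}$, and the ODEs defining the two fundamental solutions share the boundary conditions $\psi(0)=0$, $\psi'(0)=1$. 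The main obstacle is that moving from agent $i$'s environment to agent $i+1$'s (swapping the competitor pair $(b_{i+1},K_{i+1})$ for $(b_i,K_i)$) changes the drift with opposite signs at different $x$-regions, so the comparison must be combined with the change in own rate through careful analysis of $f^{(i)}$ and $f^{(i+1)}$ on $[b_i,\infty)$.

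For the ordering $V_i \leq V_{i+1}$ (from which the full chain $V_1\leq\cdots\leq V_n$ follows by transitivity), I would use the pasted representation \eqref{value-funcXz} piecewise across the three regions $[0,\hat b_i]$, $[\hat b_i,\hat b_{i+1}]$ and $[\hat b_{i+1},\infty)$. On $[0,\hat b_i]$, since $\psi^{(i)}=\psi^{(i+1)}$ there, the sign of $V_{i+1}-V_i$ is that of $E_{1,i+1}-E_{1,i}=1/(\psi^{(i+1)})'(\hat b_{i+1})-1/(\psi^{(i)})'(\hat b_i)$, and concavity of $\psi^{(i+1)}$ on $[0,\hat b_{i+1}]\subseteq[0,b_{i+1}^{**}]$ (Lemma \ref{lemma2:nonidentical}) combined with $(\psi^{(i)})'(\hat b_i)=(\psi^{(i+1)})'(\hat b_i)$ yields nonnegativity. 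On $[\hat b_i,\hat b_{i+1}]$, concavity of the value functions together with smooth-fit gives $V_{i+1}'\geq 1\geq V_i'$, so $V_{i+1}-V_i$ is nondecreasing and hence nonnegative by its boundary value at $\hat b_i$. On $[\hat b_{i+1},\infty)$ the functions $\varphi^{(i)}$ and $\varphi^{(i+1)}$ solve the same ODE (since both effectively have drift $\mu-K_i-K_{i+1}-\sum_{k\notin\{i,i+1\}}K_kI_{\{x\geq b_k\}}$) with $\varphi(\infty)=0$ and are therefore proportional to a common decreasing function; then $V_{i+1}-V_i$ is a linear combination of this decreasing function and the nonnegative constant $(K_{i+1}-K_i)/r$, and both sign cases for the coefficient give $V_{i+1}\geq V_i$ using the boundary values at $\hat b_{i+1}$ and at infinity.
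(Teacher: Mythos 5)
There is a genuine gap at the decisive step of your existence argument. You restrict Brouwer's theorem to the ordered simplex $D=\{0\leq b_1\leq\cdots\leq b_n\leq c\}$, which forces you to prove the invariance $T(D)\subseteq D$, i.e.\ that the best responses satisfy $b_i^Z\leq b_{i+1}^Z$ whenever the input profile is ordered. You correctly identify why this is hard --- passing from agent $i$'s environment to agent $i+1$'s swaps the competitor pair $(b_{i+1},K_{i+1})$ for $(b_i,K_i)$, which increases the competitors' drag on $[b_i,b_{i+1})$ but decreases it on $[b_{i+1},\infty)$, so neither a pure own-rate monotonicity (the extension of Remark \ref{rem:non-mon}) nor a pure environment comparison settles the sign --- but you then leave it at ``careful analysis of $f^{(i)}$ and $f^{(i+1)}$,'' which is precisely the part that would constitute the proof. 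Without it the fixed point need not lie in $D$, and your subsequent three-region comparison of $V_i$ and $V_{i+1}$ (which presupposes $\hat b_i\leq\hat b_{i+1}$) has nothing to stand on.

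The paper avoids this entirely: it applies Brouwer on the full hypercube $[0,c]^n$, where only boundedness and continuity from Proposition \ref{b-Zbounded} are needed, and then proves the ordering of the resulting equilibrium \emph{a posteriori}. Reducing to $n=2$ with the other agents locked in, it assumes $\hat b_1>\hat b_2$ for contradiction, uses that $V_1$ and $V_2$ are proportional to the same $\psi$ below $\hat b_2$ together with smooth fit and concavity to get $V_1>V_2$ and $V_1'>V_2'$ at $\hat b_1$, and then shows via the ODE satisfied by $V_1-V_2$ that the first point where $V_1'-V_2'$ vanishes cannot be finite; this forces $V_1-V_2$ to stay bounded away from zero at infinity, contradicting $V_1(\infty)=K_1/r\leq K_2/r=V_2(\infty)$. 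If you want to salvage your route you would need to actually establish $T(D)\subseteq D$, but the cleaner fix is to adopt the paper's order of operations: existence first on the unordered cube, ordering second by contradiction on the equilibrium itself. Your piecewise argument for $V_i\leq V_{i+1}$ given $\hat b_i\leq\hat b_{i+1}$ is essentially sound and close to the paper's (the paper uses a maximum principle on $[\hat b_{i+1},\infty)$ where you use proportionality of $\varphi^{(i)}$ and $\varphi^{(i+1)}$; both work), so that part can be retained.
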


\begin{proof}  
For any fixed $i$, the mapping $b_i^Z$ defined in \eqref{optimalresponsemapping} produces the optimal threshold strategy for agent $i$ under the assumption that the other agents use a fixed threshold strategy $(b_1,...,b_{i-1},b_{i+1},..., b_n)$. 
Recall that each $b_i^Z$ is a bounded and continuous mapping by Proposition \ref{b-Zbounded}; in fact, $b^Z_i\leq c$ for all $i$ and $(b_1,...,b_n)$. 
Define the hyper-cube $\mathcal K=\left[0,c\right]^n$ and the composite mapping 
\begin{align} \label{composite-mapping}
\mathcal K\ni (b_1,...,b_n) \mapsto (b^Z_1,...,b^Z_n).
\end{align}
Note that this is a continuous mapping from $\mathcal K$ to $\mathcal K$ and with Brouwer's fixed-point theorem we conclude that it has a fixed-point $(\hat b_1,...,\hat b_n)$. Clearly, the
threshold strategy profile $(\hat b_1,...,\hat b_n)$ is then a Nash equilibrium and the corresponding equilibrium value functions are given by \eqref{value-funcXz}.

For the ordering result, it suffices to consider the case $n=2$ since the strategies of the remaining agents can be considered already locked in. 
Assume that $K_1\leq K_2$ and consider a threshold type equilibrium $(\hat b_1, \hat b_2)$ as obtained above.
%
%
Now assume (to reach a contradiction) that $\hat b_1> \hat b_2$. If $\hat b_2>0$, then 
$$
V_i(x) = A_i\psi(x), \enskip x \leq \hat b_2.
$$
Using strict concavity of $\psi$ (on the interval $(0,\hat b_2)$) and $V_i'(\hat b_i)=1$ we obtain $A_1>A_2>0$, so 
$V_1(\hat b_2)> V_2(\hat b_2)$.
Moreover, $V_i'(x)\geq 1$ for $x\leq \hat b_i$ and $V_i'(x)\leq 1$ for $x\geq \hat b_i$, so
\begin{equation}
\label{atb1}
V'_1(\hat b_1) > V'_2(\hat b_1)\mbox{ and }V_1(\hat b_1) > V_2(\hat b_1).
\end{equation}
A similar reasoning shows that \eqref{atb1} also holds in the case $0=\hat b_2<\hat b_1$.
Define 
\[x_0=\inf\{x\geq \hat b_1:V'_1(\hat b_1) - V'_2(\hat b_1)\leq 0\}.\]
If $x_0<\infty$, then $V_1-V_2\geq 0$, $V_1'-V_2'=0$ and $V''_1-V''_2\leq 0$  at $x_0$. Consequently,
\begin{eqnarray}
0 &=& \frac{\sigma^2}{2}(V_1''-V_2'')+(\mu-K_1-K_2)(V'_1-V_2')-r(V_1-V_2)+(K_1-K_2)\\
&\leq& -r(V_1-V_2)+(K_1-K_2)<0.
\end{eqnarray}
Thus $x_0=\infty$, so $V_1(x)-V_2(x)\geq V_1(\hat b_1)-V_2(\hat b_1)>0$ for all $x\geq \hat b_1$, which contradicts $V_1(\infty)=\frac{K_1}{r}\leq \frac{K_2}{r}= V_2(\infty)$. It follows that $\hat b_1\leq \hat b_2$.

Now that $\hat b_1\leq \hat b_2$, a similar reasoning as above shows that $V_1\leq V_2$ on $(0,\hat b_2)$. Since 
\[\frac{\sigma^2}{2}V_1''+(\mu-K_1-K_2)V'_1-rV_1+K_2\geq \frac{\sigma^2}{2}V_2''+(\mu-K_1-K_2)V'_2-rV_2+K_2=0\]
on $(\hat b_2,\infty)$ and $V_1\leq V_2$ for $x\in\{\hat b_2,\infty\}$, the maximum principle shows that $V_1\leq V_2$ everywhere.
%
%
%
%
\end{proof}

%
%
%

\subsection{The case of constant coefficients}\label{sec:const-coeff-asym}
We consider two agents $i=1,2$ with maximal extraction rates $K_2 \geq K_1 >0$ and constant drift and diffusion coefficients $\mu>0$ and $\sigma>0$. Then existence of a threshold equilibrium $(\hat b_1,\hat b_2)$ satisfying $\hat b_2 \geq \hat b_1 \geq 0$ follows from Theorem \ref{thm:orderedNEasym}. Hence, an equilibrium of either of the following kinds can be found
(i) $(\hat b_1,\hat b_2)=(0,0)$, 
(ii) $(0,\hat b_2)$ with $\hat b_2>0$, and 
(iii) $(\hat b_1,\hat b_2)$ with $\hat b_2\geq \hat b_1>0$. We will now describe how to determine if an equilibrium of the last kind exists and how it can be found. Equilibria of the other kinds can be investigated similarly.  

Using the program in Remark \ref{rem:identify-f-g} we find that 
\begin{align} 
{\psi^{(1)}}(x)  & = \frac{e^{\alpha x}- e^{ \beta x}}{\alpha-\beta}, \enskip 0\leq x \leq    b_1\\
{\psi^{(2)}}(x) & = 
\left\{\begin{array}{l}
\frac{e^{\alpha x}- e^{ \beta x}}{\alpha-\beta}, \enskip 0\leq x \leq    b_1 \\
F_1 e^{\alpha_1 x}- F_2 e^{ \beta_1 x}, \enskip b_1 \leq x \leq    b_2,
\end{array} \right.\\
{\varphi^{(1)}}(x) & =
\left\{\begin{array}{l}
F_3 e^{\alpha_1 x}- (F_3-1)e^{ \beta_1x}, \enskip 0\leq x \leq    b_2 \\
F_4e^{ \beta_2 x} 												, \enskip x \geq    b_2
\end{array} \right.\\
{\varphi^{(2)}}(x) & = C {\varphi^{(1)}}(x), \enskip x\geq    b_2
\end{align}
for some constant $C>0$ implying that the boundary condition ${\varphi^{(2)}}(0)=1$ is satisfied (there is no 
need to determine these functions outside the specified intervals and the constant $C$ will cancel in the relevant calculations);  
here $\alpha:= -\frac{\mu}{\sigma^2} + \sqrt{\frac{\mu^2}{\sigma^4}+\frac{2r}{\sigma^2}}$ and
$\beta:= -\frac{\mu}{\sigma^2} - \sqrt{\frac{\mu^2}{\sigma^4}+\frac{2r}{\sigma^2}}$
as in Section~\ref{sec:constant-coeff-sym}, 
$\alpha_1:= -\frac{\mu-K_1}{\sigma^2} + \sqrt{\frac{(\mu-K_1)^2}{\sigma^4}+\frac{2r}{\sigma^2}}$, 
$\beta_1:= -\frac{\mu-K_1}{\sigma^2} - \sqrt{\frac{(\mu-K_1)^2}{\sigma^4}+\frac{2r}{\sigma^2}}$
and
$\beta_2:= -\frac{\mu-K_1-K_2}{\sigma^2} - \sqrt{\frac{(\mu-K_1-K_2)^2}{\sigma^4}+\frac{2r}{\sigma^2}}$, and the constants $F_i$ (which depend on $b_1$ and $b_2$) are determined so that the functions are continuously differentiable, i.e. according to the linear equation systems
\begin{equation}
\label{eqsys1}
\left\{\begin{array}{l}
e^{\alpha_1 b_1}F_1- e^{ \beta_1 b_1}F_2   = \frac{e^{\alpha b_1}- e^{ \beta b_1}}{\alpha-\beta} \\
\alpha_1 e^{\alpha_1 b_1}F_1 - \beta_1e^{ \beta_1 b_1}F_2   = \frac{\alpha e^{\alpha b_1}- \beta e^{ \beta b_1}}{\alpha-\beta}  
\end{array}\right.
\end{equation} 
and
\begin{equation}
\label{eqsys2}
\left\{
\begin{array}{l}
(e^{\alpha_1b_2}- e^{ \beta_1b_2})F_3  - e^{\beta_2 b_2}F_4   = - e^{ \beta_1b_2} \\
(\alpha_1  e^{\alpha_1b_2}- \beta_1e^{\beta_1b_2})F_3 -  \beta_2 e^{\beta_2 b_2}F_4 = - \beta_1e^{\beta_1b_2}.\end{array}\right.
\end{equation}
From Theorem \ref{veri-thm-asym} we know that if the system $V_i'(\hat b_1)=E_{1,i}{\psi^{(i)}}'(\hat b_i)=1,i=1,2$, (with $E_{1,i}$ as in \eqref{E}) has a solution $(\hat b_1, \hat b_2)$ then it is an equilibrium. Using the definition of $E_{1,i}$ in
\eqref{E}, we rewrite the equation system as
\begin{align}\label{eqsysys}
\frac{K_i}{r}=\frac{{\psi^{(i)}}(\hat b_i)}{{\psi^{(i)}}'(\hat b_i)}-\frac{{\varphi^{(i)}}(\hat b_i)}{{\varphi^{(i)}}'(\hat b_i)}, \enskip i=1,2,
\end{align}
which, in the present case with constant coefficients, is equivalent to
 \begin{equation}\label{eqsyst:part1}
\left\{\begin{array}{l}
\frac{K_1}{r} = \frac{e^{\alpha \hat b_1}- e^{\beta \hat b_1}}{\alpha e^{\alpha_1 \hat b_1} - \beta e^{ \beta \hat b_1}} 
-\frac{F_3 e^{\alpha_1 \hat b_1}- (F_3-1)e^{ \beta_1\hat b_1}}{\alpha_1 F_3 e^{\alpha_1 \hat b_1}- \beta_1(F_3-1)e^{ \beta_1\hat b_1}}\\ 
\frac{K_2}{r} =\frac{e^{\alpha_1\hat b_2}F_1- e^{\beta_1 \hat b_2}F_2}{\alpha_1e^{\alpha_1\hat b_2}F_1- \beta_1e^{\beta_1 \hat b_2}F_2}- \frac{1}{\beta_2}. 
\end{array}\right.
\end{equation} 
Determining expressions for $F_1(\hat b_1)$ and $F_2(\hat b_1)$ (using \eqref{eqsys1}), using these in
the second equation of \eqref{eqsyst:part1}, and solving for $\hat b_2$ yields
\begin{align} \label{asym-ex-calcs-Eq1}
\hat b_2 (\hat b_1)= \frac{\ln\left(\frac{
e^{-\beta_1 \hat b_1}\left(\alpha e^{\alpha \hat b_1}-\beta e^{\beta \hat b_1} -\alpha_1\left(e^{\alpha \hat b_1}-e^{\beta \hat b_1} \right) \right)
}{
e^{-\alpha_1 \hat b_1}\left(\alpha e^{\alpha \hat b_1}-\beta e^{\beta \hat b_1} -\beta_1\left(e^{\alpha \hat b_1}-e^{\beta \hat b_1} \right) \right)
}\frac{1-(K_2/r+1/\beta_2)\beta_1}{1-(K_2/r+1/\beta_2)\alpha_1}\right)}{\alpha_1-\beta_1}.
\end{align} 
The problem of finding a Nash equilibrium $(\hat b_1,\hat b_2)$ is thus effectively transformed to a one-dimensional problem. 
%
Indeed, relying on \eqref{eqsys2} we find 
\begin{align} \label{asdasdas111}
F_3= F_3(\hat b_2)= \frac{-(\beta_1-\beta_2)e^{\beta_1\hat b_2}}{ e^{\alpha_1\hat b_2}(\alpha_1-\beta_2) - e^{\beta_1\hat b_2}(\beta_1-\beta_2)}.
\end{align} 
This expression together with \eqref{asym-ex-calcs-Eq1} induces an explicit mapping $\hat b_1 \mapsto F_3$. Viewing $F_3$ in the first line of \eqref{eqsyst:part1} as this mapping gives us an explicit equation for $\hat b_1$ (which for brevity is not included), which can be easily studied numerically; if it has a solution $\hat b_1$, then $(\hat b_1,\hat b_2)$, with $\hat b_2=\hat b_2(\hat b_1)$ determined according to \eqref{asym-ex-calcs-Eq1}, is an equilibrium; if no solution exists than an equilibrium of either of the other kinds mentioned above can be found using methods similar to that described above. 


Let $\mu=4$, 
$\sigma^2=2$, and  
$r=0.05$, and consider the individual maximal extraction rates $K_1=0.1$ and $K_2=0.2$. Using the program above we determine a Nash equilibrium $(\hat b_1,\hat b_2)=(0.521229,0.704502)$. The corresponding equilibrium value functions are illustrated in Figure \ref{asym:fig}. In Figure \ref{asym:fig2}, the equilibrium thresholds are illustrated when varying one of the maximal extraction rates.   
At least for the chosen parameter values, the effect of a change in $K_2$ on $\hat b_1$ is relatively small.

\begin{figure}[H] 
\center
%
\caption{Left: The equilibrium thresholds when $K_2$  is varied and $K_1=0.1$. 
\newline
Right: The equilibrium threshold $\hat b_2(K_2)$ when $K_2$ is varied and $K_1=0.1$ (solid). For comparison we have also included the optimal thresholds in the corresponding one-player game (dashed), which are determined according to Section \ref{sec:constant-coeff-sym}. The straight line is the optimal barrier in the corresponding one-player singular stochastic control problem, cf. Remark \ref{remark-optimal-div}.
} \label{asym:fig2}
\end{figure}

\bibliographystyle{abbrv}
\bibliography{Bibl} 

\end{document}